\def\mathclap#1{\text{\hbox to 0pt{\hss$\mathsurround=0pt#1$\hss}}}
\newtheorem{definition}{Definition}
\newtheorem{corollary}{Corollary}
\newtheorem{lemma}{Lemma}
\newtheorem{theorem}{Theorem}
\newcommand{\Real}{\mathbb{R}}
\newcommand{\Complex}{\mathbb{C}}
\newcommand{\Zahlen}{\mathbb{Z}}
\newcommand{\T}{\mathsf{T}}
\newcommand{\Hn}{\mathsf{H}}
\newcommand{\rank}{{\rm rank}}
\newcommand{\spanv}{{\rm span}}
\newcommand{\nul}{{\rm null}}
\newcommand{\tr}{{\rm tr}}
\newcommand{\base}{{\bf{e}}}
\newcommand{\Base}{{\bf{E}}}
\newcommand{\zero}{\mathbf{0}}
\newcommand{\iu}{\mathrm{i}}
\newcommand{\R}{\mathrm{Re}}
\newcommand{\I}{\mathrm{Im}}
\newcommand{\Graph}{\mathcal{G}}
\newcommand{\Vertex}{\mathcal{V}}
\newcommand{\Edge}{\mathcal{E}}
\newcommand{\M}{\mathcal{M}}
\newcommand{\set}{\mathcal{S}}
\newcommand{\neighbor}{\mathcal{N}}
\newcommand{\objset}{\mathcal{S}_{\rm o}}
\newcommand{\conset}{\mathcal{S}_{\rm c}}
\newcommand{\consetcol}{\tilde{\conset}}
\newcommand{\Y}{{\bf Y}}
\newcommand{\V}{{\bf V}}
\newcommand{\VV}{{\bf v}}
\newcommand{\s}{{s}}
\newcommand{\MatR}{{\bf \Phi}}
\newcommand{\MatI}{{\bf \Psi}}
\newcommand{\W}{{\bf W}}
\newcommand{\C}{{\bf C}}
\newcommand{\A}{{\bf A}}
\newcommand{\X}{{\bf X}}
\newcommand{\Q}{{\bf Q}}
\newcommand{\x}{{\bf x}}
\newcommand{\y}{{\bf y}}
\newcommand{\yy}{\tilde{{\bf y}}}
\newcommand{\Wlim}{\W_{\rm lim}}
\newcommand{\Wopt}{\W_{\rm opt}}
\newcommand{\eps}{\varepsilon}
\newcommand{\vu}{\overline{v}_j^\phi}
\newcommand{\vl}{\underline{v}_j^\phi}
\newcommand{\pu}{\overline{p}_j^\phi}
\newcommand{\pl}{\underline{p}_j^\phi}
\newcommand{\qu}{\overline{q}_j^\phi}
\newcommand{\ql}{\underline{q}_j^\phi}
\newcommand{\dpur}{\overline{\mu}}
\newcommand{\dplr}{\underline{\mu}}
\newcommand{\dqur}{\overline{\eta}}
\newcommand{\dqlr}{\underline{\eta}}
\newcommand{\dvu}{\overline{\lambda}_{j}^{\phi}}
\newcommand{\dvl}{\underline{\lambda}_{j}^{\phi}}
\newcommand{\dpu}{\overline{\mu}_{j}^{\phi}}
\newcommand{\dpl}{\underline{\mu}_{j}^{\phi}}
\newcommand{\dqu}{\overline{\eta}_{j}^{\phi}}
\newcommand{\dql}{\underline{\eta}_{j}^{\phi}}
\newcommand{\dslack}{\kappa}
\newcommand{\coef}{\sigma_{ab}^{\phi,:}}
\newcommand{\coefc}{\sigma_{ab}^{:,\phi}}
\newcommand{\indp}{f_p}
\newcommand{\indq}{f_q}
\newcommand{\getnz}{\Omega}
\newcommand{\eigv}{\varrho}
\newcommand{\eigvec}{{\bf u}}
\newcommand{\U}{{\bf U}}
\newcommand{\z}{{\bf z}}
\newcommand{\keyvec}{\tilde{\bf u}}
\newcommand{\minvec}{\hat{\bf u}}
\newcommand{\up}{\!\!u\!\!}
\newcommand{\lo}{\!\!l\!\!}
\newcommand{\fengyu}[1]{\ifthenelse{\boolean{showcomments}}
{ \textcolor{red}{(SL:  #1)}}{}}
\newcommand{\yue}[1]{\ifthenelse{\boolean{showcomments}}
{ \textcolor{green}{(SL:  #1)}}{}}
\newcommand{\slow}[1]{\ifthenelse{\boolean{showcomments}}
{ \textcolor{blue}{(SL:  #1)}}{}}
\title{\LARGE \bf
Sufficient Conditions for Exact Semidefinite Relaxation of Optimal Power Flow in Unbalanced Multiphase Radial Networks}
\author{Fengyu Zhou, Yue Chen, and Steven H. Low% <-this % stops a space
\thanks{Fengyu Zhou is with the Department of Electrical Engineering, California Institute of Technology, Pasadena, CA, 91125. Email: {\tt\small f.zhou@caltech.edu}}%
\thanks{Yue Chen is with the State Key Laboratory of Power Systems, Department of Electrical Engineering, Tsinghua University, 100084 Beijing, China. Email: {\tt\small yue-chen15@mails.tsinghua.edu.cn}}
\thanks{Steven H. Low is with the Department of Electrical Engineering and the Department of Computing and Mathematical Sciences, California Institute of Technology, Pasadena, CA, 91125. Email: {\tt\small slow@caltech.edu}}%
}
\begin{document}

\maketitle
\thispagestyle{empty}
\pagestyle{empty}

%%%%%%%%%%%%%%%%%%%%%%%%%%%%%%%%%%%%%%%%%%%%%%%%%%%%%%%%%%%%%%%%%%%%%%%%%%%%%%%%
\begin{abstract}
This paper proves that in an unbalanced multiphase network with a tree topology, the semidefinite programming relaxation of optimal power flow problems is exact 
when critical buses are not adjacent to each other. 
Here a critical bus either contributes directly to the cost function or is where an injection constraint
is tight at optimality.
Our result generalizes a sufficient condition for exact relaxation in single-phase tree networks
to tree networks with arbitrary number of phases.
\end{abstract}

%%%%%%%%%%%%%%%%%%%%%%%%%%%%%%%%%%%%%%%%%%%%%%%%%%%%%%%%%%%%%%%%%%%%%%%%%%%%%%%%

\section{Introduction}
Optimal power flow (OPF) is a mathematical program that minimizes disutility subject to physical laws and other constraints \cite{momoh1999review}. 
It was first proposed in \cite{carpentier1962contribution} and there is a vast literatures on a large number of different solution methods.
In general, the OPF problem under alternating current (AC) model is both non-convex and NP-hard \cite{BienstockVerma2015, lehmann2016ac}.
There is thus a strong interest in studying its convexification or approximation; see e.g. a recent survey 
in \cite{molzahn2019survey} on relaxations and approximations of OPF.
Using semidefinite programming (SDP) to relax the non-convex constraints was first proposed in \cite{jabr2006radial, Bai2008}, and turns out to have good performance in many testcases \cite{lavaei2012zero, lesieutre2011examining}.
Many papers have proposed sufficient conditions under which the SDP relaxation is exact for a single-phase radial network (i.e. network with a tree topology)
or its single-phase equivalent of a balanced three-phase network, e.g. \cite{farivar2013branch, zhang2013geometry, bose2015quadratically, sojoudi2014exactness, gan2015exact}.

Most radial networks are however unbalanced multiphase, e.g., \cite{srinivas2000distribution, Kersting2002}.
SDP relaxation has recently been applied to unbalanced multiphase radial networks 
\cite{dall2013distributed, gan2014convex, zhao2017convex, zamzam2018beyond}.
Simulation results in these papers suggest that SDP relaxation is often exact even though no sufficient condition
for exact relaxation is known to the best of our knowledge.
Indeed, it has been observed in \cite{Berg1967, Laughton1968, Chen1991}
that a multiphase unbalanced network has an equivalent single-phase circuit
model where each bus-phase pair in the multiphase network is identified with a single
bus in the equivalent model.   The single-phase equivalent model is then a meshed network and therefore existing guarantees
on exact SDP relaxation are not applicable.
Most distribution systems are unbalanced multiphase networks \cite{christakou2013efficient}
and hence the performance of SDP relaxation of OPF on these networks is important.

In this paper, we generalize the sufficient condition for single-phase network proposed in \cite{bose2015quadratically} to the multiphase setting.
It is shown that when the critical buses or bottleneck buses in a network are non-adjacent, then the SDP relaxation is exact.
We prove in this paper the exactness results when the SDP has a unique solution, 
and state the result for the case of multiple solutions without proof.
\section{System Model}
\subsection{Network Structure}
We use a similar model as in \cite{gan2014convex, zhao2017convex}.We assume that all buses have the same number of phases and 
all generations and loads are Wye-connected.
Let the underlying simple undirected graph be $\Graph=(\Vertex,\Edge)$ where $\Vertex=\{0,1,\dots,n-1\}$ denotes the set of buses and $\Edge$  the set of edges. Throughout the paper, we will use (graph, vertex, edge) and (power network, bus, line) interchangeably. 
Without loss of generality, we let bus $0$ be the slack bus where the voltage is specified.
Assume all buses have $m$ phases for $m\in\Zahlen^+$. We will use $(j,k)$ and $j\sim k$ interchangeably to denote an edge connecting bus $j$ and $k$.
Consider an $m$-phase line $(j,k)$ characterized by the admittance matrix
$y_{jk} \in \Complex^{m\times m}$, we assume $y_{jk} $ is invertible. The admittance matrix $\Y\in\Complex^{mn\times mn}$ for the entire network can be divided into $n\times n$ number of $m\times m$ block matrices.
Let $\Y_{jk}\in\Complex^{m\times m}$ denote the block matrix corresponding to the admittance between bus $j$ and $k$, then we have
\begin{align*}
\Y_{jj}&=\sum\limits_{k:j\sim k} y_{jk},~j\in\Vertex\\
\Y_{jk}&=\left\{
\begin{array}{ll}
-y_{jk}&,j\sim k\\
0&,j\not\sim k
\end{array}
\right..
\end{align*}

For each bus $j$, let the voltages of all $m$ phases at bus $j$ be the vector $\V_j\in\Complex^{m}$. We use $\V_j^{\phi}$ for $\phi\in\M:=\{1,2,\dots,m\}$ to indicate the voltage for phase $\phi$.
Let $\V=[\V_0^\T,\V_1^\T,\dots,\V_{n-1}^\T]^\T$ be the voltage vector for the entire network. Similarly, we use $\s_j^\phi$ to denote the bus injection for phase $\phi$ at bus $j$.
Let $\base_j^\phi\in\Real^{mn}$ be the base vector which has $1$ at the $(jm+\phi)$\textsuperscript{th} entry and $0$ elsewhere. Let $\Base_j^\phi=\base_j^\phi(\base_j^\phi)^\T$, then we define
\begin{align*}
Y_j^\phi := \Base_j^\phi\Y\in\Complex^{mn\times mn}
\end{align*}
and
\begin{align*}
\MatR_j^\phi &:= \frac{1}{2}\big((Y_j^\phi)^\Hn + Y_j^\phi\big)\\
\MatI_j^\phi &:= \frac{1}{2\iu}\big((Y_j^\phi)^\Hn - Y_j^\phi\big).
\end{align*}
Both $\MatR$ and $\MatI$ are Hermitian matrices. The relationship between bus voltages and injections can be expressed as
\begin{align}\label{eq:s2V}
\nonumber
\R(\s_j^\phi) &= \V^\Hn \MatR_j^\phi \V,\\
\I(\s_j^\phi) &= \V^\Hn \MatI_j^\phi \V. 
\end{align}

\subsection{Optimal Power Flow}
Optimal power flow problems  minimize certain cost functions subject to constraints involving voltages and injections. % \cite{momoh2008electric}. 
Here we consider problems that take the linear combination of bus injections as the cost function and are subject to operational constraints for both voltage magnitudes and real/reactive injections.
For  problems with nonlinear cost functions, see Section \ref{sec:discuss}.
Suppose the bounds $\underline{V}$ and $\overline{V}$ for the voltage magnitudes are always positive and finite, but the bounds for real/reactive injections can be 
$\pm\infty$ if there are no such constraints.
\begin{subequations}
\begin{eqnarray}
\underset{\V,\s}{\text{minimize}}  && \sum\limits_{j,\phi}c_{j,{\rm re}}^\phi \R(s_j^\phi) + c_{j,{\rm im}}^\phi \I(s_j^\phi)
\label{eq:opf1.a}\\
\text{\quad subject to}& & \eqref{eq:s2V}
\label{eq:opf1.b}\\
& &\underline{V}_j^\phi \leq |\V_j^\phi| \leq \overline{V}_j^\phi, \quad\forall j,\phi
\label{eq:opf1.c}\\
& &\underline{p}_j^\phi \leq \R(\s_j^\phi) \leq \overline{p}_j^\phi, \quad\forall j,\phi
\label{eq:opf1.d}\\
& &\underline{q}_j^\phi \leq \I(\s_j^\phi) \leq \overline{q}_j^\phi, \quad\forall j,\phi
\label{eq:opf1.e}\\
& &\V_0=\V_{\rm ref}
\label{eq:opf1.f}
\end{eqnarray}
\label{eq:opf1}
\end{subequations}
Here, $\V_{\rm ref}\in\Complex^m$ denotes the reference voltage for $m$ phases at the slack bus. 
Substituting the decision variables $\s$ and $\V$ with $\W:=\V\V^\Hn$, the following equivalent formulation of \eqref{eq:opf1} is obtained.
\begin{subequations}
\begin{eqnarray}
\underset{\W\geq 0}{\text{minimize}}  && \tr(\C_0\W)
\label{eq:opf2.a}\\
\text{\quad subject to}& &\vl \leq \tr(\Base_j^\phi\W) \leq \vu, \quad\forall j,\phi
\label{eq:opf2.b}\\
& &\pl \leq \tr(\MatR_j^\phi\W) \leq \pu, \quad\forall j,\phi
\label{eq:opf2.c}\\
& &\ql \leq \tr(\MatI_j^\phi\W) \leq \qu, \quad\forall j,\phi
\label{eq:opf2.d}\\
& &[\W]_{00}=\VV_{\rm ref}
\label{eq:opf2.e}\\
& &\rank(\W)=1.
\label{eq:opf2.f}
\end{eqnarray}
\label{eq:opf2}
\end{subequations}
Here, $\underline{v}_j^\phi = |\underline{V}_j^\phi|^2$, $\overline{v}_j^\phi = |\overline{V}_j^\phi|^2$, $\VV_{\rm ref} = \V_{\rm ref}\V_{\rm ref}^\Hn$, and $[\W]_{00}$ stands for the upper left $m\times m$ submatrix of $\W$.
The cost matrix $\C_0=\sum_{j,\phi}c_{j,{\rm re}}^\phi \MatR_j^\phi + c_{j,{\rm im}}^\phi \MatI_j^\phi$. 
Dropping the rank-1 constraint in \eqref{eq:opf2.f} yields the semidefinite relaxation.
\begin{subequations}
\begin{eqnarray}
\underset{\W\geq 0}{\text{minimize}}  && \tr(\C_0\W)
\label{eq:opf3.a}\\
\text{\quad subject to}& &\eqref{eq:opf2.b}-\eqref{eq:opf2.e}.
\label{eq:opf3.b}
\end{eqnarray}
\label{eq:opf3}
\end{subequations}
We use the following exactness definition.
\begin{definition}\label{df:exactness}
A relaxation problem \eqref{eq:opf3} is {\it exact} if at least one of its optimal solutions
$\W^*$ is of rank 1.
\end{definition}
Given a rank-1 solution $\W^*$ of \eqref{eq:opf3}, a $\V^*$ can be uniquely determined,
which is feasible, and hence optimal, for \eqref{eq:opf2}.
%
%\begin{theorem}\label{thm:rank1}
% Clearly, for a given relaxation problem \eqref{eq:opf3}, if one of its optimal solution $\W^*$ is of rank $1$, %then \eqref{eq:opf3} is exact.
%\end{theorem}
%\begin{proof}
% then it must also be feasible for \eqref{eq:opf2}.
% As the optimal value of the relaxation \eqref{eq:opf3} is always less than or equal to that of \eqref{eq:opf2},
% $\W^*$ is also optimal to \eqref{eq:opf2}. As a result, \eqref{eq:opf3} is exact.
%\end{proof}

We first make the assumption that \eqref{eq:opf3} has a unique optimal solution.
In Section \ref{sec:discuss}, we discuss the case when multiple optimal solutions exist.

\section{Perturbation Analysis}
We first study a perturbed version of \eqref{eq:opf3}. 
% The perturbed problems, as well as their dualities, can approximate \eqref{eq:opf3} to arbitrary precision and are more tractable under elaborate construction.
\subsection{Perturbed Problem}
Fix a nonzero Hermitian matrix $\C_1$, and consider the following perturbed problem for $\eps\geq0$.
\begin{subequations}
\begin{eqnarray}
\underset{\W\geq 0}{\text{minimize}}  && \tr((\C_0+\eps\C_1)\W)
\label{eq:opf_perturb.a}\\
\text{\quad subject to}& &\eqref{eq:opf2.b}-\eqref{eq:opf2.e}.
\label{eq:opf_perturb.b}
\end{eqnarray}
\label{eq:opf_perturb}
\end{subequations}
We say that \eqref{eq:opf_perturb} is {\it exact} if one of its optimal solution is of rank 1.

\begin{lemma}\label{thm:perturbation}
For any nonzero $\C_1$, if there exists a sequence $\{\eps_{l}\}_{l=1}^{\infty}$ with $\lim_{l\to\infty}\eps_l=0$ 
such that \eqref{eq:opf_perturb} is exact for all $\eps_l$, then \eqref{eq:opf3} is exact.
\end{lemma}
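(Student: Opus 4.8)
The plan is to transfer rank-$1$ optimality through the limit $\eps_l\to 0$, exploiting that \eqref{eq:opf3} and \eqref{eq:opf_perturb} share a common feasible set $\mathcal{F}$ (all $\W\geq 0$ satisfying \eqref{eq:opf2.b}--\eqref{eq:opf2.e}). First I would establish that $\mathcal{F}$ is compact. By \eqref{eq:opf2.b} with the finite positive bounds $\vl,\vu$, each diagonal entry $\tr(\Base_j^\phi\W)=[\W]_{(jm+\phi)(jm+\phi)}$ lies in a bounded interval; since $\W\geq 0$ every off-diagonal entry is bounded by the geometric mean of the corresponding two diagonal entries, so $\mathcal{F}$ is bounded, and it is closed because all its constraints are non-strict. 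For each $l$, let $\W_l^*$ be a rank-$1$ optimal solution of the $\eps_l$-perturbed problem, which exists by hypothesis. By compactness $\{\W_l^*\}$ has a subsequence converging to some $\W^\dagger\in\mathcal{F}$; I pass to this subsequence.

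Next I would show $\rank(\W^\dagger)=1$. Writing $\W_l^*=\VV_l\VV_l^\Hn$ with $\VV_l\in\Complex^{mn}$, boundedness of the diagonal keeps $\{\VV_l\}$ bounded, so after a further subsequence (and an immaterial choice of global phase) $\VV_l\to\VV^\dagger$, whence $\W^\dagger=\VV^\dagger(\VV^\dagger)^\Hn$ has rank at most $1$. It cannot be rank $0$: the equality constraint \eqref{eq:opf2.e} forces $[\W^\dagger]_{00}=\VV_{\rm ref}=\V_{\rm ref}\V_{\rm ref}^\Hn\neq 0$, since the reference voltage $\V_{\rm ref}$ is nonzero. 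Hence $\rank(\W^\dagger)=1$.

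It then remains to prove that $\W^\dagger$ is optimal for \eqref{eq:opf3}. Let $p^*(\eps)$ denote the optimal value of \eqref{eq:opf_perturb} and let $\Wopt\in\mathcal{F}$ be an optimal solution of \eqref{eq:opf3}. Feasibility of $\W^\dagger$ gives $p^*(0)\leq\tr(\C_0\W^\dagger)$. For the reverse inequality I would combine $\tr(\C_0\W_l^*)=p^*(\eps_l)-\eps_l\tr(\C_1\W_l^*)$ with the comparison $p^*(\eps_l)\leq\tr((\C_0+\eps_l\C_1)\Wopt)=p^*(0)+\eps_l\tr(\C_1\Wopt)$; because $\W_l^*\in\mathcal{F}$ keeps $\tr(\C_1\W_l^*)$ bounded and $\eps_l\to 0$, the correction terms vanish, and continuity of the trace yields $\tr(\C_0\W^\dagger)=\lim_l\tr(\C_0\W_l^*)\leq p^*(0)$. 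The two bounds give $\tr(\C_0\W^\dagger)=p^*(0)$, so $\W^\dagger$ is a rank-$1$ optimal solution and \eqref{eq:opf3} is exact by Definition \ref{df:exactness}.

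The main obstacle is the rank step: rank is only lower semicontinuous, so a limit of rank-$1$ matrices could in principle collapse to rank $0$, and the argument hinges entirely on the fixed slack-bus block \eqref{eq:opf2.e} to exclude this. Establishing compactness of $\mathcal{F}$, so that a convergent subsequence exists at all, is the other point requiring care, whereas the optimal-value bookkeeping is routine once $\eps_l\to 0$. Note that the standing uniqueness assumption on \eqref{eq:opf3} is not actually needed here, since exactness only requires the single rank-$1$ optimizer $\W^\dagger$ we construct.
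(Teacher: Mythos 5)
Your proposal is correct and follows essentially the same route as the paper: compactness of the feasible set, extraction of a convergent subsequence of rank-$1$ optimizers, exclusion of a rank-$0$ limit via the slack-bus constraint \eqref{eq:opf2.e}, and optimality of the limit because the $\eps_l\C_1$ perturbation terms vanish. The only cosmetic differences are that the paper verifies optimality by contradiction with explicit $\nu/4$ bounds and invokes closedness of the rank-$\leq 1$ set rather than the factorization $\W_l^*=\VV_l\VV_l^\Hn$, neither of which changes the substance.
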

\begin{proof}
Suppose the rank-1 optimal solution to \eqref{eq:opf_perturb} for $\eps_l$ is $\W_l$.
If the rank-1 optimal solution is non-unique, then pick any one as $\W_l$.
As all the $\vu$ are finite, we assume they are upper bounded by a constant $\alpha$.
Hence the constraint \eqref{eq:opf2.b} implies all the diagonal elements of $\W$ are upper bounded by $\alpha$.
Since $\W$ is positive semidefinite, the norms of all their entries can be upper bounded by $\alpha$ as well.
Consider the set
\begin{align}
\set=\{\W\geq0:\eqref{eq:opf2.b}-\eqref{eq:opf2.f}\}.
\end{align}
%where $\|\cdot\|_{\max}$ is the max norm of a matrix.
The set $\{\W:\rank(\W)\leq1\}$ is closed \cite{horn1990matrix} and all other constraints \eqref{eq:opf2.b}-\eqref{eq:opf2.e} also prescribe closed sets.
The only $\W$ with rank $0$ is the zero matrix which violates \eqref{eq:opf2.e} and is thus infeasible.
Further, we have shown that for any $\W\in\set$, its max norm must be upper bounded by $\alpha$,
so $\set$ is compact.
The infinite set $\{\W_{l}\}_{l=1}^{\infty}$ is a subset in $\set$ and hence has a limit point $\Wlim\in\set$ \cite{rudin1964principles}.
For any $\eps_l$, \eqref{eq:opf_perturb} has the same feasible set as \eqref{eq:opf3}, and hence the rank-1 matrix 
$\Wlim$ is also feasible for \eqref{eq:opf3}.
Next we show that $\Wlim$ is also an optimal point for \eqref{eq:opf3}.

If there exists another feasible $\Wopt\neq\Wlim$ such that $\tr(\C_0\Wlim)-\tr(\C_0\Wopt)=\nu>0$. Clearly $\forall\W$ feasible for \eqref{eq:opf3}, $|\tr(\C_1\W)|\leq m^2n^2\|\C_1\|_{\infty}\|\W\|_{\infty}\leq m^2n^2\alpha\|\C_1\|_{\infty}$.
For sufficiently large $l$ such that
\begin{align*}
\eps_l &< \frac{\nu}{4m^2n^2\alpha\|\C_1\|_{\infty}}\\
\|\W_l-\Wlim\|_{\infty}&<\frac{\nu}{4m^2n^2\|\C_0\|_\infty},
\end{align*}
we have
\begin{subequations}
\begin{eqnarray}
\tr(\C_0(\W_l-\Wlim)) &\geq&-\frac{\nu}{4} \label{eq:ineq1.a}\\
\tr(\eps_l\C_1\W_l) &\geq& -\frac{\nu}{4} \label{eq:ineq1.b}\\
\tr(\C_0\Wlim) &=& \tr(\C_0\Wopt)+\nu \label{eq:ineq1.c}\\
\frac{\nu}{4} &\geq& \tr(\eps_l\C_1\Wopt) \label{eq:ineq1.d}
\end{eqnarray}
\label{eq:ineq1}
\end{subequations}
Summing up \eqref{eq:ineq1.a}-\eqref{eq:ineq1.d} gives
\begin{align*}
&\tr((\C_0+\eps_l\C_1)\W_l)>\tr((\C_0+\eps_l\C_1)\Wopt).
\end{align*}
contradicting the optimality of $\W_l$ for $\eps_l$.
\end{proof}
%\slow{You don't need $\eps_l>0$, right?}

\subsection{Duality}
The dual problem of \eqref{eq:opf_perturb} is as follows.
\begin{subequations}
\begin{eqnarray}
\nonumber
\underset{\substack{\dvu,\dvl,\dpu,\dpl,\\\dqu,\dql,\dslack}}{\text{maximize}}  && -\sum\limits_{j,\phi} (\dvu\vu-\dvl\vl+\dpu\pu-\dpl\pl+ \nonumber\\[-18pt]
&&\quad\quad\quad \dqu\qu-\dql\ql+\tr(\dslack\VV_{\rm ref}))\\
\label{eq:opf_dual.a}
\text{\quad subject to}& &\dvu,\dvl,\dpu,\dpl,\dqu,\dql \geq 0
\label{eq:opf_dual.b}\\
&&\A(\eps)\geq 0.
\label{eq:opf_dual.c}
\end{eqnarray}
\label{eq:opf_dual}
\end{subequations}
Dual variables $(\dvu,\dvl),(\dpu,\dpl),(\dqu,\dql)$ and  $\dslack$ correspond to \eqref{eq:opf2.b}-\eqref{eq:opf2.e} in \eqref{eq:opf_perturb.b}, respectively.
Specifically $\dslack\in\Complex^{m\times m}$ is Hermitian but not necessarily semidefinite positive.
Matrix $\A(\eps)$ denotes
\begin{align}\label{eq:A}
\nonumber
\A(\eps):=&\sum\limits_{j,\phi}(\dvu-\dvl)\Base_j^\phi+(\dpu-\dpl)\MatR_j^\phi+(\dqu-\dql)\MatI_j^\phi\\
&+\C_0+\eps\C_1+\Pi(\dslack)
\end{align}
and $\Pi(\dslack)$ is an $mn\times mn$ matrix whose upper left $m\times m$ block is $\dslack$ and other elements are $0$.
Note that the upper and lower bounds in \eqref{eq:opf2.c} and \eqref{eq:opf2.d} could take values of $\pm\infty$.
However, since the feasible set prescribed by \eqref{eq:opf_perturb.b} is compact, the actual values of 
$\MatR_j^\phi\W$ and $\MatI_j^\phi\W$ are always finite and hence the dual variables associated with such
constraints will be $0$. These constraints can be removed from \eqref{eq:opf_perturb} and \eqref{eq:opf_dual}.
We will use $\dvu(\eps)$, $\dvl(\eps)$ and so on to denote the Lagrange multipliers for $\eps$. 
Clearly, when $\eps=0$, \eqref{eq:opf_dual} is the dual problem of \eqref{eq:opf3} with 
$\dvu(0)$, $\dvl(0)$ and so on as the Lagrange multipliers.
If the value of $\eps$ is clear in the context, we might denote them simply as $\dvu$, $\dvl$ and so on for convenience.
Let $\A^*(\eps)$ be the dual matrix when dual variables are evaluated at a KKT point.
\section{Sufficient Conditions}
The first condition we assume is:
\begin{enumerate}[label=A\arabic*:]
\item Problem \eqref{eq:opf3} is strictly feasible, 
{i.e., there exists a feasible point such that strict inequality holds in \emph{all}
inequality constraints in \eqref{eq:opf2.b}-\eqref{eq:opf2.e}. }
\end{enumerate}

Then the Slater's condition is satisfied for both \eqref{eq:opf3} and \eqref{eq:opf_perturb} as they share the same feasible set, and the strong duality between \eqref{eq:opf_perturb} and \eqref{eq:opf_dual} holds.
% For the rest of the paper, we assume A1 always holds.
The KKT condition is necessary and sufficient optimality condition for the primal \eqref{eq:opf_perturb} and the dual \eqref{eq:opf_dual}
problem.
%In the following sections, the values of dual variables refer to the associated KKT multipliers at the optimal point $\W^*$, and $\A^*$ refers to the value of $\A(\eps)$ under such KKT multipliers.
In this section, $\W^*$ refers to the unique solution of \eqref{eq:opf3}.

\subsection{Notations}
The following notations and definitions will be used throughout the rest of the paper.

For each bus-phase pair $(j,\phi)$, we define
\begin{align*}
\indp(j,\phi):=\left\{
\begin{array}{ll}
0  ,&\tr(\MatR_j^\phi\W^*) \not\in\{ \pu,\pl\}\\
1  ,&\tr(\MatR_j^\phi\W^*) = \pu\\
-1 ,&\tr(\MatR_j^\phi\W^*) = \pl
\end{array}
\right..
\end{align*}
The strict feasibility in A1 guarantees that $\pu$ and $\pl$ cannot be attained simultaneously, so the definition above is fully specified. Similarly we define
\begin{align*}
\indq(j,\phi):=\left\{
\begin{array}{ll}
0  ,&\tr(\MatI_j^\phi\W^*) \not\in\{\qu,\ql\}\\
1  ,&\tr(\MatI_j^\phi\W^*)=\qu\\
-1 ,&\tr(\MatI_j^\phi\W^*)=\ql
\end{array}
\right..
\end{align*}
\begin{definition}
The {\it critical objective} bus set is
\begin{align*}
\objset:=\{j\in\Vertex:\exists\phi~\text{s.t.}~c_{j,{\rm re}}^\phi\neq 0~\text{or}~c_{j,{\rm im}}^\phi\neq 0\}.
\end{align*}
\end{definition}
\begin{definition}
The {\it critical constraint} bus set is
\begin{align*}
\conset:=\{j\in\Vertex:\exists\phi~\text{s.t.}~\indp(j,\phi)\neq 0~\text{or}~\indq(j,\phi)\neq 0\}.
\end{align*}
\end{definition}

For any $mn\times mn$ matrix $\X$, we use $[\X]_{j,k}$ to denote the $m\times m$ block of $\X$ from rows $jm+1$ to $jm+m$ and from columns $km+1$ to $km+m$.
Further, for $\phi\in\M$, we denote $[\X]_{j,k}^{\phi,:}$ and $[\X]_{j,k}^{:,\phi}$ as the $\phi$\textsuperscript{th} row and column of $[\X]_{j,k}$, respectively.
Similarly, for an $mn$ dimensional vector $\x$, we use $[\x]_{j}$ to denote the subvector of $\x$ from the $(jm+1)$\textsuperscript{th} to $(jm+m)$\textsuperscript{th} entry.
Denote
\begin{align*}
\getnz(\x):=\{j\in\Vertex, [\x]_{j}\neq\zero\}
\end{align*}
and we use $|\getnz|$ to denote its cardinality.

We say $\Vertex_1\subseteq\Vertex$ is {\it connected} in $\Graph$ if $\Graph$ has a connected subgraph whose vertex set is $\Vertex_1$. 
For any node $j\in\Vertex$, we denote the set of its neighbors in $\Graph$ as $\neighbor(j)$. For $\mathcal{K}\subseteq \Vertex$, we reload $\neighbor(\mathcal{K}):=\cup_{j\in\mathcal{K}}\neighbor(j)$.

We say a set of real numbers are {\it sign-semidefinite} if all the non-zero numbers are of the same sign.
%\slow{I think this is called \emph{sign-definite} in the literature, e.g., in Javad's paper?}

\subsection{Main Results}
Consider the following conditions.
\begin{enumerate}[resume,label=A\arabic*:]
\item The underlying graph $\Graph$ is a tree.
\item $(\objset\cup\conset)\cap\neighbor(\objset\cup\conset)=\emptyset$.
\item $\objset\cap\conset=\emptyset$.
\item For any $j\in\objset\cap\conset$ and $\phi\in\M$, $c_{j,{\rm re}}^\phi\indp(j,\phi)\geq 0$ and $c_{j,{\rm im}}^\phi\indq(j,\phi)\geq 0$.
\end{enumerate}
Informally, A3 means all the critical buses are not adjacent to each other.
A5 means if a bus is both critical in objective function and constraints, then for all $m$ phases, $\{c_{j,{\rm re}}^\phi, \indp(j,\phi)\}$ and $\{c_{j,{\rm im}}^\phi, \indq(j,\phi)\}$ are sign-semidefinite, respectively.
The following two theorems provide two sets of sufficient conditions for exact SDP relaxation.
\begin{theorem}\label{thm:sc1}
If conditions A1, A2, A3 and A4 hold, then \eqref{eq:opf3} is exact.
\end{theorem}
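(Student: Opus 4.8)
The plan is to prove exactness for the perturbed problem \eqref{eq:opf_perturb} along a sequence $\eps_l\downarrow 0$ and then invoke Lemma \ref{thm:perturbation}. I would fix a single tree-sparse, sign-consistent perturbation, for instance $\C_1=\sum_{j,\phi}\MatR_j^\phi$, so that $\A(\eps)$ in \eqref{eq:A} keeps the block-sparsity pattern of $\Graph$ while assigning every bus a strictly positive effective weight of order $\eps$. By A1 Slater's condition holds, strong duality between \eqref{eq:opf_perturb} and \eqref{eq:opf_dual} is available, and KKT is necessary and sufficient. At a KKT point with primal optimizer $\W^*$ and dual matrix $\A^*(\eps)\ge 0$, complementary slackness gives $\tr(\A^*(\eps)\W^*)=0$, hence $\A^*(\eps)\W^*=\zero$, so the range of $\W^*$ lies in the kernel of $\A^*(\eps)$. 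Since \eqref{eq:opf2.e} forces $\W^*\ne\zero$, we have $\rank(\W^*)\ge 1$, and the whole problem reduces to showing $\rank(\A^*(\eps))=mn-1$, i.e.\ that $\A^*(\eps)$ has a one-dimensional kernel.

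The first step is to read the block pattern of $\A^*(\eps)$ off \eqref{eq:A}. Each $\MatR_j^\phi$ and $\MatI_j^\phi$ is supported on the diagonal block $[\,\cdot\,]_{j,j}$ and on the blocks $[\,\cdot\,]_{j,k},[\,\cdot\,]_{k,j}$ with $k\in\neighbor(j)$, while $\Base_j^\phi$ and $\Pi(\dslack)$ are block-diagonal; hence $[\A^*(\eps)]_{j,k}=\zero$ whenever $(j,k)\notin\Edge$ and $j\ne k$, matching the tree topology of A2. On an edge $(j,k)$ the off-diagonal block is a combination of blocks built from the invertible admittance $y_{jk}$, weighted by the scalar coefficients $c_{j,{\rm re}}^\phi+(\dpu-\dpl)$ and $c_{j,{\rm im}}^\phi+(\dqu-\dql)$ together with their $k$-counterparts. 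For $j\notin\objset\cup\conset$ complementary slackness annihilates the injection multipliers and the cost vanishes, so at $\eps=0$ such an edge would decouple; the $\eps\C_1$ term is exactly what keeps these weights strictly positive and every edge of the tree coupled. By A3 no edge carries two genuinely critical endpoints, and by A4 no bus mixes a cost coefficient with an injection multiplier, so on each edge the surviving weights originate from a single sign source, pure cost, pure multiplier, or the positive $\eps$-weight, and cannot cancel.

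The core is then a corank computation for $\A^*(\eps)$ by a leaf-to-root induction on the tree. I would write $\A^*(\eps)$ as a sum of per-edge Hermitian terms plus a block-diagonal remainder, apportioning each diagonal block among its incident edges so that each edge term is a $2m\times2m$ Hermitian matrix coupling blocks $j$ and $k$ through $y_{jk}$ and the remainder is block-diagonal. For $\x\in\ker\A^*(\eps)$ we have $\x^{\Hn}\A^*(\eps)\x=0$; once each edge term and the remainder are shown to be positive semidefinite, each must annihilate $\x$ separately. The sign control from A3, A4 and the positive $\eps$-weights yields positive semidefiniteness of each edge term, and invertibility of $y_{jk}$ makes each such term have rank exactly $m$, so that its kernel encodes an invertible $m\times m$ relation between $[\x]_j$ and $[\x]_k$, the direct multiphase analogue of the corank-one $2\times2$ edge block in the single-phase argument of \cite{bose2015quadratically}. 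Propagating these $n-1$ invertible relations across the connected tree determines $\x$ from the single reference block $[\x]_0$, leaving an $m$-dimensional freedom; the equality-constraint multiplier $\dslack$ in $[\A^*(\eps)]_{0,0}$, being unconstrained in sign, supplies the final rank that collapses $[\x]_0$ to the one-dimensional span of $\V_{\rm ref}$. Hence $\ker\A^*(\eps)$ is one-dimensional, $\rank(\W^*)=1$ for every $\eps_l$, and Lemma \ref{thm:perturbation} transfers exactness to \eqref{eq:opf3}.

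The step I expect to be the genuine obstacle is establishing, in the multiphase setting, that each per-edge $2m\times2m$ term is positive semidefinite with kernel of dimension exactly $m$, and that the diagonal blocks can be apportioned uniformly over the whole tree so that this holds simultaneously on all edges. In the single-phase case each edge contributes a $2\times2$ block whose semidefiniteness and corank-one kernel follow from a short scalar computation; here the coupling is a full $m\times m$ combination of $y_{jk}$, $y_{jk}^{\Hn}$ and the phase-selection structure of $\MatR_j^\phi,\MatI_j^\phi$, so one must show that the sign conditions still force positive semidefiniteness and prevent the edge kernel from collapsing below dimension $m$, which would over-constrain $\x$, or exceeding it, which would leave the corank larger than one. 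Verifying that the small $\eps$-perturbation preserves this delicate balance, rather than merely providing connectivity, is where the generalization departs most from the single-phase analysis.
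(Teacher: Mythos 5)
Your overall scaffolding (perturb the objective, pass to a sequence $\eps_l\downarrow 0$, invoke Lemma \ref{thm:perturbation}, and study the dual matrix $\A^*(\eps)$ via complementary slackness) matches the paper's framework, but the step you yourself flag as the obstacle is a genuine gap, and it is exactly the step where the paper takes a different route. You reduce the problem to showing $\rank(\A^*(\eps))=mn-1$ via a per-edge decomposition of $\A^*(\eps)$ into $2m\times 2m$ positive semidefinite terms, each with kernel of dimension exactly $m$, plus an appeal to $\dslack$ to kill the last $m-1$ dimensions at the root. None of this is established: you give no apportionment of the diagonal blocks that makes every edge term PSD simultaneously, no proof that each edge kernel has dimension exactly $m$ (the coupling block is a phase-by-phase rescaling of rows/columns of $y_{jk}$, not a scalar multiple of it, so the single-phase $2\times 2$ computation does not transfer), and no argument that the KKT value of $\dslack$ produces corank one in the $[\,\cdot\,]_{0,0}$ block. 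The paper explicitly identifies ``bound the dimension of $\nul(\A^*)$'' as the approach that breaks down in the multiphase setting and avoids it: instead it shows only the weaker statement that any kernel vector of $\A^*$ vanishing on bus $0$ must be identically zero. This is done by (i) proving $\A^*(\eps_l)$ is $\Graph$-invertible (every off-diagonal edge block is an invertible $m\times m$ matrix, Theorem \ref{thm:Aproperty}), (ii) a support-connectivity lemma for minimal-support kernel vectors (Theorem \ref{thm:connectivity}), and (iii) a tree boundary argument: a node $j$ adjacent to the connected support through its unique neighbor $k$ forces $[\A^*\minvec]_j=[\A^*]_{jk}[\minvec]_k\neq\zero$, a contradiction. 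If $\W^*$ had rank $\geq 2$, the constraint $[\W^*]_{00}=\VV_{\rm ref}$ yields an eigenvector combination $\keyvec\in\nul(\A^*)$ with $[\keyvec]_0=\zero$, which the above rules out.

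Two further concrete problems with your write-up. First, your choice $\C_1=\sum_{j,\phi}\MatR_j^\phi$ gives $[\C_1]_{jk}=-\tfrac{1}{2}(y_{jk}+y_{jk}^\Hn)$ on an edge, which need not be invertible even though $y_{jk}$ is, and it does not interact correctly with the cost/multiplier coefficients at critical buses: the paper's $\C_1$ is built row-by-row (resp.\ column-by-column) with coefficients $\indp(j,\phi)+\iu\,\indq(j,\phi)$ precisely so that, using A3 and the sign-semidefiniteness from A4/A5 together with Lemma \ref{thm:sign}, every row of $[\Q]_{ab}$ is a \emph{nonzero} scalar multiple of the corresponding row of $\Y_{ab}$, which is what makes $[\A^*]_{ab}$ invertible. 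Second, you assert that for non-critical buses ``complementary slackness annihilates the injection multipliers,'' but the active set of the perturbed optimum need not coincide a priori with that of \eqref{eq:opf3}; the paper needs Lemma \ref{thm:sign} (convergence of a subsequence of perturbed optima to $\W^*$ in max norm) to guarantee inactive constraints stay inactive and the multipliers carry the right signs along the sequence.
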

\begin{theorem}\label{thm:sc2}
If conditions A1, A2, A3 and A5 hold, then \eqref{eq:opf3} is exact.
\end{theorem}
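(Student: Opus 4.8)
The plan is to build a rank-one certificate from the dual matrix $\A^*(\eps)$ and then let the perturbation vanish through Lemma \ref{thm:perturbation}. First I would pick a generic Hermitian $\C_1$ and analyze the perturbed pair \eqref{eq:opf_perturb}--\eqref{eq:opf_dual}. Condition A1 supplies Slater's condition, so strong duality and the KKT conditions hold; for the SDP, complementary slackness between any primal optimum $\W^*$ and the dual optimum forces $\A^*(\eps)\W^*=\zero$, that is $\mathrm{range}(\W^*)\subseteq\nul(\A^*(\eps))$, with $\A^*(\eps)\geq0$. Let $P_0$ be the projection onto the slack-bus coordinates; the equality constraint \eqref{eq:opf2.e} gives $P_0\,\mathrm{range}(\W^*)=\mathrm{range}([\W^*]_{00})=\mathrm{range}(\VV_{\rm ref})$, which is one dimensional. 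Hence it is enough to prove that $P_0$ is injective on $\nul(\A^*(\eps))$, since this forces $\rank(\W^*)=\rank([\W^*]_{00})=1$; exactness of \eqref{eq:opf_perturb} along a sequence $\eps_l\to0$ then yields exactness of \eqref{eq:opf3} by Lemma \ref{thm:perturbation}.

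The second step is to read the block structure of $\A^*(\eps)$ off \eqref{eq:A} using the tree assumption A2. Since each $\MatR_j^\phi$ and $\MatI_j^\phi$ is assembled from the single admittance row $\Base_j^\phi\Y$, the block $[\A^*(\eps)]_{a,b}$ with $a\neq b$ vanishes unless $a\sim b$, so $\A^*(\eps)$ inherits the sparsity of $\Graph$. At any bus $j\notin\objset\cup\conset$ the injection cost coefficients vanish and, by complementary slackness, so do the active- and reactive-power multipliers $\dpu-\dpl$ and $\dqu-\dql$; this annihilates every admittance-bearing term attached to $j$, leaving a diagonal block built only from the voltage multipliers and the perturbation, and, by the same vanishing, making the off-diagonal block between two non-critical buses identically zero. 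Thus all freedom in $\nul(\A^*(\eps))$ is localized around the critical buses, and choosing $\C_1$ generically restores strict complementary slackness and removes, for all small $\eps$ outside a finite set, the degeneracies that could enlarge $\nul(\A^*(\eps))$ at non-critical buses.

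I would then root the tree at the slack bus $0$ and solve $\A^*(\eps)\eigvec=\zero$ from the leaves inward by Schur complements, so that the block at each non-root bus is expressed through its parent's block via a reduced block; if every such reduced block is nonsingular, then every null vector is determined by $[\eigvec]_0$, i.e.\ $P_0$ is injective on $\nul(\A^*(\eps))$. Along a subtree of purely non-critical buses the vanishing off-diagonals make each reduced block equal to the corresponding diagonal block of $\A^*(\eps)$, nonsingular by the previous step, so a reduced block can fail to be invertible only at a critical bus. This is where A3 and A5 act: A3 guarantees that every critical bus $j$ has only non-critical neighbours, so the child reduced blocks feeding the reduced block $S_j$ at $j$ are nonsingular, and $S_j$ equals the critical-bus diagonal block minus a term weighted, phase by phase, by the coefficients $c_{j,{\rm re}}^\phi+\dpu-\dpl$ and $c_{j,{\rm im}}^\phi+\dqu-\dql$. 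At a bus $j\in\objset\cap\conset$ complementary slackness makes $\dpu-\dpl$ share the sign of $\indp(j,\phi)$ and $\dqu-\dql$ that of $\indq(j,\phi)$, while A5 guarantees $c_{j,{\rm re}}^\phi\indp(j,\phi)\geq0$ and $c_{j,{\rm im}}^\phi\indq(j,\phi)\geq0$, so cost and multiplier reinforce rather than cancel.

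The hardest step, which I expect to be the crux, is to upgrade this scalar, per-phase sign bookkeeping into the $m\times m$ matrix statement that $S_j\succ0$ for every critical $j\neq0$, thereby excluding any spurious null direction not anchored at the slack bus. The sign control from A5 is what prevents the subtracted Schur term from overwhelming the admittance part of the critical-bus diagonal block, and the perturbation supplies the uniform margin needed for the inequality to persist for all small $\eps$; condition A4, used in Theorem \ref{thm:sc1}, is simply the coarser alternative that removes doubly-critical buses outright so that no sign reasoning is required. Once $S_j\succ0$ is established for all non-root buses, $P_0$ is injective on $\nul(\A^*(\eps))$, every optimum of \eqref{eq:opf_perturb} is rank one, and Lemma \ref{thm:perturbation} delivers Theorem \ref{thm:sc2}.
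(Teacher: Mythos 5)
Your skeleton is right (perturb, invoke Lemma \ref{thm:perturbation}, use complementary slackness $\A^*(\eps)\W^*=\zero$ plus the rank-one slack-bus block to pin the rank down, and use A3/A5 for sign control at critical buses), but the middle of the argument has a genuine gap and an internal contradiction. Your leaves-inward Schur-complement elimination needs every tree-edge block $[\A^*(\eps)]_{ab}$ to be invertible so that a null vector is determined by $[\eigvec]_0$; yet you simultaneously assert that the off-diagonal block between two adjacent non-critical buses is identically zero. If that block really vanished, the elimination would decouple at that edge and $P_0$ would not be injective on $\nul(\A^*(\eps))$; you would then need the diagonal blocks at non-critical buses to be nonsingular, but those blocks consist only of the voltage multipliers $\dvu-\dvl$ on the diagonal and are \emph{zero} wherever no voltage bound is active, so the claim ``nonsingular by the previous step'' is false. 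The resolution in the paper is precisely that $\C_1$ is \emph{not} generic: its edge blocks are built from the admittance rows ($[\C_1]_{ab}=\Y_{ab}$ between non-critical buses, and $(\indp+\iu\indq)$-scaled rows of $\Y_{ab}$ at critical buses), so that after perturbation every edge block of $\A^*(\eps_l)$ is, row by row or column by column, a \emph{nonzero} multiple of the corresponding invertible $\Y_{ab}$ --- this is Theorem \ref{thm:Aproperty} ($\Graph$-invertibility), and it is where A5 and the sign alignment enter (to guarantee the coefficients $\dpur_a^\phi-\dplr_a^\phi+c_{a,{\rm re}}^\phi+2\eps_l\indp(a,\phi)$ cannot cancel to zero). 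A ``generic'' $\C_1$ gives you no such control, because the multipliers depend on $\eps$ in an unknown way.

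Two further points. First, you apply complementary slackness at the \emph{perturbed} optimum to conclude that multipliers vanish at buses outside $\conset$, but $\conset$ is defined by the \emph{unperturbed} optimum $\W^*$; a constraint inactive at $\W^*$ could in principle become active after perturbation. The paper closes this with Lemma \ref{thm:sign}, which uses compactness and the uniqueness of $\W^*$ to extract a subsequence $\eps_l\downarrow 0$ along which inactive constraints stay inactive and the multiplier differences keep the sign of $\indp,\indq$. Second, you explicitly leave the crux --- $S_j\succ 0$ at critical buses --- unproven, and it is not the statement the paper needs: instead of positive definiteness of Schur complements, the paper proves invertibility of the edge blocks and then uses a combinatorial lemma (Theorem \ref{thm:connectivity}: a minimal-support null vector of a $\Graph$-invertible matrix has connected support) together with the tree structure to derive a contradiction from any null vector of $\A^*$ vanishing at bus $0$. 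That route avoids any $m\times m$ definiteness claim entirely, so I would redirect your effort from bounding $S_j$ to constructing $\C_1$ so that the edge blocks are provably invertible.
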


Both theorems rely on strict feasibility, tree structure and critical buses not be adjacent.
Theorem \ref{thm:sc1} needs $\objset$ and $\conset$ to be also disjoint.
On the other hand, Theorem \ref{thm:sc2} allows them to intersect, but says for each $(j,\phi)$ in the intersection, the objective and constraints should encourage its injection to move in the same direction.
\footnote{For example, if $\R(\s_j^\phi)$ is minimized in the objective function, then the lower bound of $\R(\s_j^\phi)$ should not be active in the constraints.}
Since A4 implies A5, Theorem \ref{thm:sc2} is stronger than Theorem \ref{thm:sc1}. 
In the next section, we will only provide a proof of Theorem \ref{thm:sc2}.

One drawback of Theorems \ref{thm:sc1} and \ref{thm:sc2} is that the sufficient conditions are given in terms of
the optimal solution $\W^*$.
The next result provides a sufficient condition that depends only on the primal parameters in \eqref{eq:opf1}.
Let 
\begin{align*}
\consetcol:=\{j\in\Vertex:\exists\phi~\text{s.t.}~\{\pm\infty\}\not\subseteq\{\underline{p}_j^\phi,\overline{p}_j^\phi,\underline{q}_j^\phi,\overline{p}_j^\phi\}\}.
\end{align*}

\begin{corollary}\label{col:primalcon}
Suppose A1 and A2 hold,
If $(\objset\cup\consetcol)\cap\neighbor(\objset\cup\consetcol)=\emptyset$ and $\objset\cap\consetcol=\emptyset$, then \eqref{eq:opf3} is exact.
\end{corollary}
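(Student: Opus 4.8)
The plan is to deduce Corollary \ref{col:primalcon} from Theorem \ref{thm:sc1} by observing that the solution-dependent set $\conset$ is always contained in the purely parameter-dependent set $\consetcol$, and then transporting the corollary's hypotheses across this inclusion using monotonicity of the neighborhood operator. The point is that no knowledge of the optimal $\W^*$ is needed: the inclusion $\conset\subseteq\consetcol$ holds for whatever the (assumed unique) optimizer turns out to be.

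First I would establish $\conset\subseteq\consetcol$. Take any $j\in\conset$. By definition there is a phase $\phi$ with $\indp(j,\phi)\neq 0$ or $\indq(j,\phi)\neq 0$; suppose for instance $\indp(j,\phi)=1$, so that $\tr(\MatR_j^\phi\W^*)=\pu$. The left-hand side is a trace of a product of finite matrices and is therefore a finite real number, which forces $\pu<+\infty$. Hence the four bounds $\{\pl,\pu,\ql,\qu\}$ attached to $(j,\phi)$ are not all infinite, placing $j$ in $\consetcol$. The remaining cases ($\indp(j,\phi)=-1$ and $\indq(j,\phi)=\pm 1$) are identical, each time using that a binding constraint pins the optimal injection to a \emph{finite} bound. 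This is the only step that touches the definitions rather than pure set theory, and the care it demands is getting the direction right: I must argue binding $\Rightarrow$ finite bound, i.e.\ $\conset\subseteq\consetcol$, and read $\consetcol$ as the set of buses having at least one finite injection bound on some phase.

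Next I would invoke monotonicity. Since $\conset\subseteq\consetcol$, we have $\objset\cup\conset\subseteq\objset\cup\consetcol$. The operator $\neighbor(\cdot)$ is monotone, being a union of neighborhoods over a larger index set, and intersection is monotone in each argument, so $(\objset\cup\conset)\cap\neighbor(\objset\cup\conset)\subseteq(\objset\cup\consetcol)\cap\neighbor(\objset\cup\consetcol)$. The corollary's hypothesis makes the right-hand side empty, hence the left-hand side is empty, which is exactly condition A3. In the same way $\objset\cap\conset\subseteq\objset\cap\consetcol=\emptyset$ yields A4.

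Finally, A1 and A2 are assumed in the corollary and A3, A4 have just been derived, so Theorem \ref{thm:sc1} applies and gives that \eqref{eq:opf3} is exact. I do not anticipate a genuine obstacle: once the inclusion $\conset\subseteq\consetcol$ is in hand the argument is entirely set-theoretic, and the whole content of the corollary is that replacing the optimizer-dependent critical-constraint set by its conservative parameter-based over-approximation only strengthens the non-adjacency and disjointness hypotheses.
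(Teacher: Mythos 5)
Your proof is correct and takes the same route as the paper's own (one-line) proof: establish the inclusion $\conset\subseteq\consetcol$, deduce that the corollary's hypotheses imply A1--A4, and invoke Theorem \ref{thm:sc1}. You merely spell out the ``binding bound must be finite'' step and the monotonicity of $\neighbor(\cdot)$, which the paper leaves implicit.
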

\begin{proof}
As $\conset\subseteq\consetcol$, the conditions in the corollary imply A1--A4 and thus exactness holds.
\end{proof}

Informally, Corollary \ref{col:primalcon} shows that if all the buses involved in the objective function and constraints are not adjacent to each other, then the SDP relaxation is exact.
\section{Proof of Sufficient Conditions}
\subsection{Review}

The existing works \cite{bose2015quadratically, sojoudi2014exactness} prove that the optimal solution of SDP relaxation is of rank 1 in single phase networks. % We will then discuss why the same trick cannot be directly applied to multiphase unbalanced networks. 
A crucial step in their proof uses the strong duality to show that the product of the primal optimal solution $\W^*$ and the dual matrix $\A^*$ is a zero matrix, and hence the rank of $\W^*$ cannot exceed the dimension of $\A^*$'s null space. Under certain conditions \cite{bose2015quadratically, sojoudi2014exactness} prove that $\A^*$'s null space is of dimension at most $1$. Hence the optimal primal solution $\W^*$ must be of rank at most $1$.

This argument however breaks down in a multiphase network for the following two reasons. First, although the underlying graph for $m$ phase network is still a tree, each bus now has $m$ different phases and might have $m$ unbalanced voltages in general. If we extend each phase to a separate vertex in the new graph and connect every phase pair between every two neighboring buses, then the $m$ phase network will be transformed into an $(mn)$-node meshed network with multiple cycles \cite{Berg1967, Laughton1968, Chen1991}.
Hence the theory for single-phase radial network is not applicable.
Second, in an $m$ phase network, it is unknown wether the null space of $\A^*$ at the optimal point is still of dimension $1$. 
% Thereby, it is less likely to directly prove 
It is therefore not clear how to prove $\rank(\W^*)=1$ via analyzing the dimension of $\nul(\A^*)$.

In the following argument, we  use a similar proof framework to that in \cite{bose2015quadratically}, but the proof will be based on the eigenvectors of $\W^*$ instead of the dimension of $\nul(\A^*)$.
From now on, we suppose A1, A2, A3 and A5 hold.

\subsection{Preliminaries}
Our strategy is to prove the exactness of the perturbed OPF problem and then use Lemma \ref{thm:perturbation} to show \eqref{eq:opf3} is also exact.
It is important to make sure that all the non-active constraints will remain non-active in the perturbation neighborhood.
\begin{lemma}\label{thm:sign}
For any nonzero $\C_1$, there exists a positive sequence $\eps\downarrow 0$ such that for each $\eps$ in the sequence,
one can collect $(\dpu(\eps),\dpl(\eps),\dqu(\eps),\dql(\eps))$ from at least one of its KKT multiplier tuples satisfying
\begin{subequations}
\begin{align}
\indp(j,\phi)=0&\implies \dpu(\eps)=\dpl(\eps)=0\label{eq:implication.a}\\
\indp(j,\phi)\neq0&\implies \indp(j,\phi)\cdot(\dpu(\eps)-\dpl(\eps))\geq0\label{eq:implication.b}\\
\indq(j,\phi)=0&\implies \dqu(\eps)=\dql(\eps)=0\label{eq:implication.c}\\
\indq(j,\phi)\neq0&\implies \indq(j,\phi)\cdot(\dqu(\eps)-\dql(\eps))\geq0.\label{eq:implication.d}
\end{align}\label{eq:implication}
\end{subequations}
\end{lemma}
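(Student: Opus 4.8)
The plan is to deduce all four implications from complementary slackness, using a single continuity fact: a constraint that is strictly slack at $\W^*$ stays strictly slack at the perturbed optimum for small $\eps$, which forces its multiplier to vanish. Recall that under A1 strong duality and the KKT conditions hold for \eqref{eq:opf_perturb}--\eqref{eq:opf_dual}, so for each $\eps$ there is at least one optimal multiplier tuple, and any primal--dual optimal pair obeys complementary slackness: writing $\W^*(\eps)$ for an optimal solution of \eqref{eq:opf_perturb}, the multiplier $\dpu(\eps)$ can be nonzero only if $\tr(\MatR_j^\phi\W^*(\eps))=\pu$, and $\dpl(\eps)$ only if $\tr(\MatR_j^\phi\W^*(\eps))=\pl$, with the analogous statement for $\dqu(\eps),\dql(\eps)$ against $\qu,\ql$. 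Since strict feasibility gives $\pl<\pu$ and $\ql<\qu$, at $\W^*$ at most one bound of each pair is active, which is exactly what $\indp,\indq$ record.

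First I would show $\W^*(\eps)\to\W^*$ as $\eps\downarrow 0$. Fix any positive sequence $\eps_l\downarrow 0$. Problems \eqref{eq:opf3} and \eqref{eq:opf_perturb} share the same feasible set, which is compact: the bound \eqref{eq:opf2.b} together with positive semidefiniteness bounds every entry of $\W$, exactly as in the proof of Lemma \ref{thm:perturbation}, and all constraints are closed. Hence $\{\W^*(\eps_l)\}$ has a convergent subsequence with some limit $\tilde{\W}$. From optimality of $\W^*(\eps_l)$ and feasibility of $\W^*$ for \eqref{eq:opf_perturb} we have $\tr((\C_0+\eps_l\C_1)\W^*(\eps_l))\leq\tr((\C_0+\eps_l\C_1)\W^*)$; letting $l\to\infty$ along the subsequence, the $\eps_l\C_1$ terms vanish because $\tr(\C_1\W)$ is bounded on the compact feasible set, so $\tr(\C_0\tilde{\W})\leq\tr(\C_0\W^*)$. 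As $\tilde{\W}$ is feasible for \eqref{eq:opf3}, it is optimal, and the standing uniqueness assumption forces $\tilde{\W}=\W^*$. Every convergent subsequence thus has limit $\W^*$, so by compactness the whole sequence converges: $\W^*(\eps_l)\to\W^*$.

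With convergence in hand I would transfer the strict slackness. If $\indp(j,\phi)=0$, then $\pl<\tr(\MatR_j^\phi\W^*)<\pu$, and continuity of $\W\mapsto\tr(\MatR_j^\phi\W)$ together with $\W^*(\eps_l)\to\W^*$ gives $\pl<\tr(\MatR_j^\phi\W^*(\eps_l))<\pu$ for all large $l$; complementary slackness then yields $\dpu(\eps_l)=\dpl(\eps_l)=0$, which is \eqref{eq:implication.a}. If $\indp(j,\phi)=1$, only the lower bound is strictly slack at $\W^*$, so the same argument kills $\dpl(\eps_l)$, and dual feasibility $\dpu(\eps_l)\geq 0$ gives $\indp(j,\phi)(\dpu(\eps_l)-\dpl(\eps_l))=\dpu(\eps_l)\geq 0$; the case $\indp(j,\phi)=-1$ is symmetric, killing $\dpu(\eps_l)$ instead, which establishes \eqref{eq:implication.b}. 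The identical argument with $\MatI_j^\phi$ and $\qu,\ql$ gives \eqref{eq:implication.c}--\eqref{eq:implication.d}. Discarding the finitely many initial indices for which some implication fails produces the desired sequence; constraints with infinite bounds are already handled, since their multipliers are identically zero.

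The main obstacle I anticipate is the convergence step $\W^*(\eps_l)\to\W^*$, and this is exactly where uniqueness of the unperturbed optimum is indispensable: without it the perturbed optima could accumulate at a different optimizer of \eqref{eq:opf3}, at which a constraint strictly slack at $\W^*$ need not remain slack, breaking the complementary-slackness bookkeeping. Everything afterward is routine continuity and sign tracking. One point deserving care is that the lemma only asks for a single admissible multiplier tuple per $\eps_l$; this causes no difficulty, because complementary slackness binds \emph{every} optimal dual solution against the chosen primal optimum, so once $\W^*(\eps_l)$ is close enough to $\W^*$ any KKT tuple satisfies \eqref{eq:implication.a}--\eqref{eq:implication.d}.
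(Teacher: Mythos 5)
Your proposal is correct and follows essentially the same route as the paper: extract (sub)sequential convergence of the perturbed optima to $\W^*$ via compactness and uniqueness, use continuity to keep strictly slack constraints slack for small $\eps$, and then invoke complementary slackness plus dual nonnegativity to obtain \eqref{eq:implication.a}--\eqref{eq:implication.d}. The only (harmless) differences are that you upgrade the paper's subsequence extraction to whole-sequence convergence and observe that the conclusion holds for \emph{every} KKT tuple rather than just one.
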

\begin{proof}
First consider any positive sequence $\{{\eps}_l\}_{l=1}^{\infty}$ such that $\lim_{l\to\infty}\eps_l=0$. 
Suppose the optimal solution to \eqref{eq:opf_perturb} under $\eps_l$ is $\W_l$ (if there are multiple solutions then select one of them).
As \eqref{eq:opf_perturb.b} prescribes a compact set, using a similar argument as in the proof of Lemma \ref{thm:perturbation} we know there must be a subsequence of $\{{\eps}_l\}_{l=1}^{\infty}$, denoted by $\{{\eps}_{z_t}\}_{t=1}^{\infty}$, 
such that $\W_{z_t}$ converges to $\W^*$ in the max norm. The difference $\|\W_{z_t}-\W^*\|_\infty$ can be arbitrarily small for sufficiently large $t$.
When $t$ is large enough, the non-active constraints in \eqref{eq:opf_perturb.b} under $\W^*$ will remain non-active under $\W_{z_t}$, and the corresponding KKT multipliers will remain $0$.
As a result,
\begin{align*}
&\indp(j,\phi)=0\implies \pl<\tr(\MatR_j^\phi\W^*)<\pu\\
\implies &\pl<\tr(\MatR_j^\phi\W_{z_t})<\pu\implies\dpu(\eps_{z_t})=\dpl(\eps_{z_t})=0,
\end{align*}
\begin{align*}
&\indp(j,\phi)=+1\implies\pl<\tr(\MatR_j^\phi\W^*)\\
\implies&\pl<\tr(\MatR_j^\phi\W_{z_t})
\implies\dpl(\eps_{z_t})=0\\
\implies&\indp(j,\phi)\cdot(\dpu(\eps_{z_t})-\dpl(\eps_{z_t}))\geq0,
\end{align*}
\begin{align*}
&\indp(j,\phi)=-1\implies\tr(\MatR_j^\phi\W^*)<\pu\\
\implies&\tr(\MatR_j^\phi\W_{z_t})<\pu
\implies\dpu(\eps_{z_t})=0\\
\implies&\indp(j,\phi)\cdot(\dpu(\eps_{z_t})-\dpl(\eps_{z_t}))\geq0
\end{align*}
all hold. A similar argument can also be applied to prove \eqref{eq:implication.c} and \eqref{eq:implication.d}.
\end{proof}
%\slow{I think we should call it a Lemma instead of a Theorem?}

\subsection{Properties of Dual Matrix $\A^*(\eps)$}
In order to apply Lemma \ref{thm:perturbation}, we construct $\C_1\in\Complex^{mn\times mn}$ in the following manner.
\begin{align*}
[\C_1]_{jj}=\zero\in\Complex^{m\times m}&,\quad \text{for}~j\in\Vertex\\
[\C_1]_{jk}=\zero\in\Complex^{m\times m}&,\quad \text{for}~(j,k)\not\in\Edge
\end{align*}
When $(j,k)\in\Edge$, we assume $j<k$. If neither $j$ nor $k$ is in $\objset\cup\conset$, then we construct $[\C_1]_{jk}=\Y_{jk}$.

If $j\in\objset\cup\conset$, then A3 guarantees $k\not\in\objset\cup\conset$. $\forall\phi\in\M$, 
we set $[\C_1]_{jk}^{\phi,:}$ to $\Y_{jk}^{\phi,:}$ if $c_{j,{\rm re}}^\phi=c_{j,{\rm im}}^\phi=\indp(j,\phi)=\indq(j,\phi)=0$, and to $(\indp(j,\phi)+\indq(j,\phi)\iu)\Y_{jk}^{\phi,:}$ otherwise.

If $k\in\objset\cup\conset$, then A3 guarantees $j\not\in\objset\cup\conset$. $\forall\phi\in\M$, 
we similarly set $[\C_1]_{jk}^{:,\phi}$ to $(\Y_{kj}^{\phi,:})^\Hn$ if $c_{k,{\rm re}}^\phi=c_{k,{\rm im}}^\phi=\indp(k,\phi)=\indq(k,\phi)=0$, and to $(\indp(k,\phi)-\indq(k,\phi)\iu)(\Y_{kj}^{\phi,:})^\Hn$ otherwise.

Finally, we set $[\C_1]_{kj}:=[\C_1]_{jk}^\Hn$ for all $j<k$ to make $\C_1$ Hermitian.

\begin{definition}\label{df:Ginv}
An $mn\times mn$ positive semidefinite matrix $\X$ is $\Graph$-{invertible} for some graph $\Graph$ if the following two conditions hold:
\begin{enumerate}
\item $\forall (a,b)\in\Edge$, $[\X]_{ab}$ is invertible.
\item $\forall a,b\in\Vertex$ such that $a\neq b$ and $(a,b)\not\in\Edge$, $[\X]_{ab}$ is all zero.
\end{enumerate}
\end{definition}

The next theorem provides a key intermediate result to prove Theorem \ref{thm:sc2}.
Suppose under such $\C_1$, the sequence guaranteed by Lemma \ref{thm:sign} is $\{\eps_l\}_{l=1}^{\infty}$.
\begin{theorem}\label{thm:Aproperty}
Under A1, A2, A3 and A5, for each $\eps_l$, the dual matrix $\A^*(\eps_l)$ is $\Graph$-{invertible}.
\footnote{If the KKT multiplier tuple at $\eps_l$ is non-unique, then $\A^*(\eps_l)$ is evaluated at the multiplier tuple in Lemma \ref{thm:sign} satisfying \eqref{eq:implication}.}
\end{theorem}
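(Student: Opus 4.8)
The plan is to verify the two properties in Definition~\ref{df:Ginv} for $\A^*(\eps_l)$. Positive semidefiniteness comes for free: $\A^*(\eps_l)$ is the dual matrix evaluated at a KKT point, so dual feasibility~\eqref{eq:opf_dual.c} forces $\A^*(\eps_l)\geq0$. Everything else rests on the block-sparsity of the summands of $\A(\eps)$ in~\eqref{eq:A}. Each $\Base_j^\phi$ sits in the diagonal block $[\cdot]_{jj}$; each $\MatR_j^\phi,\MatI_j^\phi$ is a Hermitian part of $\Base_j^\phi\Y$ and so only populates the $\phi$-th row of the blocks $[\cdot]_{jk}$ and the $\phi$-th column of the blocks $[\cdot]_{kj}$ for $k\in\neighbor(j)\cup\{j\}$; the cost $\C_0$ inherits the same pattern; $\Pi(\dslack)$ lives in $[\cdot]_{00}$; and $\C_1$ is supported on edge blocks by its construction. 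Property~2 of Definition~\ref{df:Ginv} is then immediate: for $a\neq b$ with $(a,b)\notin\Edge$ no summand reaches $[\A^*(\eps_l)]_{ab}$, so that block is zero.

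For Property~1, fix an edge $(a,b)$. Only the terms indexed by $a$ and $b$, together with $\eps_l\C_1$, touch $[\A^*(\eps_l)]_{ab}$: bus $a$ feeds the $\phi$-th rows through $\Y_{ab}^{\phi,:}$ and bus $b$ feeds the $\psi$-th columns through $(\Y_{ba}^{\psi,:})^\Hn$. I would collect bus $a$'s row contribution as a complex scalar $\gamma_a^\phi$ multiplying $\Y_{ab}^{\phi,:}$, whose real and imaginary parts are, up to $\tfrac12$, the combinations $(\overline{\mu}_a^\phi-\underline{\mu}_a^\phi)+c_{a,{\rm re}}^\phi$ and $(\overline{\eta}_a^\phi-\underline{\eta}_a^\phi)+c_{a,{\rm im}}^\phi$, and similarly record bus $b$'s column contribution. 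Now A3 is decisive: since $a\sim b$, at most one endpoint lies in $\objset\cup\conset$. For the non-critical endpoint its cost coefficients vanish (definition of $\objset$) and its indicators $\indp,\indq$ vanish (definition of $\conset$), so Lemma~\ref{thm:sign} forces the matching dual differences to vanish; hence that endpoint contributes nothing. This collapses the block to $[\A^*(\eps_l)]_{ab}=D\,\Y_{ab}$ with $D$ the diagonal matrix of entries $\gamma_a^\phi+\eps_l\delta_a^\phi$, where $\delta_a^\phi$ is the coefficient built into $\C_1$ (equal to $1$ when all of $c_{a,{\rm re}}^\phi,c_{a,{\rm im}}^\phi,\indp(a,\phi),\indq(a,\phi)$ vanish and $\indp(a,\phi)+\indq(a,\phi)\iu$ otherwise), and $D=\eps_l\eye$ when neither endpoint is critical. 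Since $\Y_{ab}=-y_{ab}$ is invertible by assumption, it remains only to show each $\gamma_a^\phi+\eps_l\delta_a^\phi\neq0$.

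This last step is the main obstacle, and it is where the perturbation and the sign hypotheses earn their keep. I would argue by cases in $\phi$. If $\indp(a,\phi)\neq0$, the real part of $\gamma_a^\phi+\eps_l\delta_a^\phi$ equals $\tfrac12(\overline{\mu}_a^\phi-\underline{\mu}_a^\phi)+\tfrac12 c_{a,{\rm re}}^\phi+\eps_l\indp(a,\phi)$; by Lemma~\ref{thm:sign} the dual difference carries the sign of $\indp(a,\phi)$ or is zero, by A5 (or, when $a\notin\objset$, because the cost then vanishes) the cost carries that sign or is zero, and $\eps_l\indp(a,\phi)$ carries it strictly, so the real part is nonzero. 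The case $\indq(a,\phi)\neq0$ is symmetric on the imaginary part. If both indicators vanish at $\phi$ but some cost coefficient is nonzero, then $\delta_a^\phi=0$ and the dual differences vanish, leaving $\gamma_a^\phi=\tfrac12(c_{a,{\rm re}}^\phi+\iu c_{a,{\rm im}}^\phi)\neq0$. Finally, if all four vanish, then $\gamma_a^\phi=0$ and $\delta_a^\phi=1$, leaving $\eps_l\neq0$. In every case the entry is nonzero, so $D$ and hence $[\A^*(\eps_l)]_{ab}$ is invertible, and $\A^*(\eps_l)$ is $\Graph$-invertible.

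The delicate point throughout is that $\eps_l\delta_a^\phi$ is precisely the term that prevents cancellation when the unperturbed coefficient $\gamma_a^\phi$ vanishes---which it always does when neither endpoint is critical, and may when a tight constraint carries a zero multiplier---while the sign alignment from Lemma~\ref{thm:sign} and A5 guarantees the perturbation can never cancel a contribution that is already present. The one routine bookkeeping check I would carry out to make the reduction to $D\,\Y_{ab}$ airtight is that the construction of $\C_1$ yields the same row coefficient $\delta_a^\phi$ irrespective of whether $a<b$ or $a>b$, the two branches of the construction being conjugate-transposes of one another.
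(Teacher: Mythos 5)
Your proposal is correct and follows essentially the same route as the paper's proof: reduce to the off-diagonal blocks of the matrix $\Q$ obtained after discarding the $\Base_j^\phi$ and $\Pi(\dslack)$ terms, use A3 to ensure only one endpoint of each edge can be critical, write each edge block as a per-phase (row or column) scaling of the invertible $\Y_{ab}$, and invoke Lemma~\ref{thm:sign} together with A5 to show each scaling factor is nonzero because its real and imaginary parts are sums of sign-aligned terms that the $\eps_l\C_1$ perturbation cannot cancel. The only cosmetic difference is that you organize the case analysis per phase by which of $\{c_{a,{\rm re}}^\phi,c_{a,{\rm im}}^\phi,\indp(a,\phi),\indq(a,\phi)\}$ are nonzero and package the block as $D\Y_{ab}$, whereas the paper splits on whether \eqref{eq:para0.a} or \eqref{eq:para0.b} holds and argues row-by-row and column-by-column; the substance is identical.
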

\begin{proof}
The value of $\A^*(\eps_l)$ is the same as the right hand side of \eqref{eq:A} when all dual variables take values at their corresponding KKT multipliers (with respect to $\eps_l$).
If not otherwise specified, all the $(\dpu,\dpl,\dqu,\dql)$ in this proof refer to the tuple in Lemma \ref{thm:sign} with respect to $\eps_l$.
Since for all $a\neq b$, $[\Base_j^\phi]_{ab}$ and $[\Pi(\dslack)]_{ab}$ are always zero matrices, it is sufficient to show
\begin{align*}%\label{eq:Q}
\Q:=\sum\limits_{j,\phi}\Big((\dpu-\dpl)\MatR_j^\phi+(\dqu-\dql)\MatI_j^\phi\Big)+\C_0+\eps_l\C_1
\end{align*}
satisfies the two conditions in Definition \ref{df:Ginv}.\footnote{The matrix $\Q$ itself might not be $\Graph$-invertible as $\Q$ might not be positive semidefinite, but $\A^*\geq0$ always hold.}

For $a\neq b$ and $(a,b)\not\in\Edge$, recall that $\C_0$ is the linear combination of $\MatR_j^{\phi}$ and $\MatI_j^{\phi}$.
When $(a,b)\not\in\Edge$, $\Y_{ab}$ is a zero matrix and so are all $[\MatR_j^{\phi}]_{ab}$ and $[\MatI_j^{\phi}]_{ab}$.
The construction of $\C_1$ also guarantees $[\C_1]_{ab}$ is all zero. Hence $[\Q]_{ab}$ is all zero as well.

Now assume $a<b$. If $(a,b)\in\Edge$, we have
\begin{align}\label{eq:Qab}
\nonumber
&[\Q]_{ab}\\
\nonumber
=&\sum_{\phi}\!\Big((\dpur_a^\phi-\dplr_a^\phi+c_{a,{\rm re}}^\phi)[\MatR_a^\phi]_{ab}\!+\!(\dqur_a^\phi-\dqlr_a^\phi+c_{a,{\rm im}}^\phi)[\MatI_a^\phi]_{ab}\Big)\\
\nonumber
+&\sum_{\phi}\!\Big((\dpur_b^\phi-\dplr_b^\phi+c_{b,{\rm re}}^\phi)[\MatR_b^\phi]_{ab}\!+\!(\dqur_b^\phi-\dqlr_b^\phi+c_{b,{\rm im}}^\phi)[\MatI_b^\phi]_{ab}\Big)\\
+&\eps_l[\C_1]_{ab}.
\end{align}

If neither $a$ nor $b$ is in $\objset\cup\conset$, then by definition, for all $\phi\in\M$ there must be 
\begin{subequations}
\begin{align}
c_{a,{\rm re}}^\phi=c_{a,{\rm im}}^\phi=\indp(a,\phi)=\indq(a,\phi)=0,%\dpur_a^\phi=\dplr_a^\phi=\dqur_a^\phi=\dqlr_a^\phi=0,
\label{eq:para0.a}\\
c_{b,{\rm re}}^\phi=c_{b,{\rm im}}^\phi=\indp(b,\phi)=\indq(b,\phi)=0.%=\dpur_b^\phi=\dplr_b^\phi=\dqur_b^\phi=\dqlr_b^\phi=0.
\label{eq:para0.b}
\end{align}
\end{subequations}
Equation \eqref{eq:Qab} and Lemma \ref{thm:sign} imply $[\Q]_{ab}=\eps_l[\C_1]_{ab}$. By construction, $[\C_1]_{ab}=\Y_{ab}$ is invertible, and so is $[\Q]_{ab}$.

If $a\in\objset\cup\conset$, then A3 guarantees $b\not\in\objset\cup\conset$. Thus \eqref{eq:para0.b} holds for all $\phi\in\M$.
For a given $\phi\in\M$, if \eqref{eq:para0.a} holds, then by construction, we have $[\Q]_{ab}^{\phi,:}=\eps_l[\C_1]_{ab}^{\phi,:}=\eps_l\Y_{ab}^{\phi,:}$.
If \eqref{eq:para0.a} does not hold for the given $\phi$, then we have
\begin{align*}
[\Q]_{ab}^{\phi,:}&=(\dpur_a^\phi-\dplr_a^\phi+c_{a,{\rm re}}^\phi+2\eps_l\indp(a,\phi))\frac{\Y_{ab}^{\phi,:}}{2}\\
&+(\dqur_a^\phi-\dqlr_a^\phi+c_{a,{\rm im}}^\phi+2\eps_l\indq(a,\phi))\frac{\Y_{ab}^{\phi,:}}{2}\iu.
\end{align*}
Note that Condition A5 and Lemma \ref{thm:sign} imply both $\{\dpur_a^\phi-\dplr_a^\phi,\indp(a,\phi),c_{a,{\rm re}}^\phi\}$ and $\{\dqur_a^\phi-\dqlr_a^\phi,\indq(a,\phi),c_{a,{\rm im}}^\phi\}$ are sign-semidefinite sets, respectively.
When \eqref{eq:para0.a} does not hold, at least one of $\{c_{a,{\rm re}}^\phi, c_{a,{\rm im}}^\phi, \indp(a,\phi), \indq(a,\phi)\}$ is non-zero.
As a result, there exists some non-zero $\coef\in\Complex$ such that $[\Q]_{ab}^{\phi,:}=\coef\Y_{ab}^{\phi,:}$.
In short, in the case $a\in\objset\cup\conset$, $[\Q]_{ab}^{\phi,:}$ is always a non-zero multiple of $\Y_{ab}^{\phi,:}$. 
The invertibility of $\Y_{ab}$ indicates all the $\Y_{ab}^{\phi,:}$ are independent for $\phi\in\M$, so $[\Q]_{ab}$ is also invertible.

If $b\in\objset\cup\conset$, then A3 guarantees $a\not\in\objset\cup\conset$. Then \eqref{eq:para0.a} holds for all $\phi\in\M$.
For a given $\phi\in\M$, if \eqref{eq:para0.b} holds, then by construction, we have $[\Q]_{ab}^{:,\phi}=\eps_l[\C_1]_{ab}^{:,\phi}=\eps_l(\Y_{ba}^{\phi,:})^\Hn$.
If \eqref{eq:para0.b} does not hold, then 
%we have
%\begin{align*}
%[\Q]_{ab}^{:,\phi}&=(\dpur_b^\phi-\dplr_b^\phi+c_{b,{\rm re}}^\phi+2\indp(b,\phi))\frac{\Y_{ab}^{:,\phi}}{2}\\
%&+(\dqur_b^\phi-\dqlr_b^\phi+c_{b,{\rm im}}^\phi+2\indq(b,\phi))\frac{\Y_{ab}^{:,\phi}}{2}\iu.
%\end{align*}
similar to the previous case, there exists some non-zero $\coefc\in\Complex$ such that $[\Q]_{ab}^{:,\phi}=\coefc(\Y_{ba}^{\phi,:})^\Hn$.
Hence $[\Q]_{ab}^{:,\phi}$ is always a non-zero multiple of $(\Y_{ba}^{\phi,:})^\Hn$. 
The invertibility of $\Y_{ba}$ indicates all the $\Y_{ba}^{\phi,:}$ are independent for $\phi\in\M$, so $[\Q]_{ab}$ is also invertible.
\end{proof}

The next theorem is a generalization of Theorem 3.3 in \cite{van2003graphs}.
While \cite{van2003graphs} studies the matrices whose non-zero off-diagonal entries correspond to an edge in $\Graph$, 
we extend the results to $\Graph$-invertible matrices.
% \slow{Comment on generalization from what
%case to what case?}
\begin{theorem}\label{thm:connectivity}
Let $\y\in\Complex^{mn}$ be a non-zero vector with the smallest $|\getnz(\y)|$ satisfying $\X\y=\zero$, where $\X$ is $\Graph$-invertible.
Then $\getnz(\y)$  is connected in $\Graph$.
\end{theorem}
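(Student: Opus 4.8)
The plan is to argue by contradiction, exploiting the positive semidefiniteness that is built into the definition of $\Graph$-invertibility (Definition \ref{df:Ginv}) together with the minimality of the support $|\getnz(\y)|$. Assume $\getnz(\y)$ is \emph{not} connected in $\Graph$. Since ``connected in $\Graph$'' forbids routing through vertices outside the set, this means the subgraph of $\Graph$ induced on $\getnz(\y)$ is disconnected, so I can write $\getnz(\y)=\Vertex_1\sqcup\Vertex_2$ with both parts nonempty and with \emph{no} edge of $\Graph$ joining $\Vertex_1$ to $\Vertex_2$; concretely, take $\Vertex_1$ to be one connected component of the induced subgraph and $\Vertex_2$ the union of the others. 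Split $\y$ accordingly: let $[\y_1]_j=[\y]_j$ for $j\in\Vertex_1$ and $[\y_1]_j=\zero$ otherwise, and set $\y_2:=\y-\y_1$, so that $\getnz(\y_1)=\Vertex_1$ and $\getnz(\y_2)=\Vertex_2$.

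The key step is to show that $\y_1$ alone already lies in the kernel of $\X$. I would not attempt this by computing $\X\y_1$ block by block, since at a vertex $i\notin\getnz(\y)$ adjacent to $\Vertex_1$ there is no reason for the corresponding block of $\X\y_1$ to vanish. Instead I pass to the quadratic form. From $\X\y=\zero$ we get $\y^\Hn\X\y=0$, and expanding along the splitting,
\begin{align*}
0=\y^\Hn\X\y=\y_1^\Hn\X\y_1+\y_2^\Hn\X\y_2+\y_1^\Hn\X\y_2+\y_2^\Hn\X\y_1.
\end{align*}
The cross term equals $\y_1^\Hn\X\y_2=\sum_{a\in\Vertex_1,\,b\in\Vertex_2}[\y]_a^\Hn[\X]_{ab}[\y]_b$, and every such $(a,b)$ has $a\neq b$ with $(a,b)\notin\Edge$, so the second clause of Definition \ref{df:Ginv} forces $[\X]_{ab}=\zero$; the same reasoning annihilates $\y_2^\Hn\X\y_1$. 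Hence $\y_1^\Hn\X\y_1+\y_2^\Hn\X\y_2=0$ with both summands nonnegative because $\X\geq0$, so $\y_1^\Hn\X\y_1=0$. For a positive semidefinite matrix a vector annihilates the quadratic form exactly when it lies in the kernel, so $\X\y_1=\zero$.

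To conclude, $\y_1$ is nonzero since $\Vertex_1\neq\emptyset$, and $|\getnz(\y_1)|=|\Vertex_1|<|\getnz(\y)|$ because $\Vertex_2\neq\emptyset$. This contradicts the choice of $\y$ as a nonzero kernel vector of smallest support, so $\getnz(\y)$ must be connected. The main thing to get right is the reduction itself: recognizing that the quadratic form decouples across a graph cut (so that only the off-edge vanishing condition, and \emph{not} the edge-block invertibility, is needed here), and checking that the definition of connectivity makes disconnectedness of the \emph{induced} subgraph the correct negation, guaranteeing that the cut $\Vertex_1\sqcup\Vertex_2$ has no crossing edges.
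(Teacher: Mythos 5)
Your proof is correct and follows essentially the same route as the paper's: contradict minimality by splitting the support across a cut with no crossing edges, use the vanishing off-edge blocks of $\X$ to decouple the quadratic form, and invoke positive semidefiniteness to promote $\y_1^\Hn\X\y_1=0$ to $\X\y_1=\zero$. The only cosmetic difference is that the paper first verifies $[\X\yy]_j=\zero$ blockwise on $\getnz_1$ for the truncated vector and then evaluates $\yy^\Hn\X\yy$, whereas you expand $\y^\Hn\X\y$ directly and kill the cross terms; both hinge on exactly the same two facts.
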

\begin{proof}
If not, then assume $\getnz(\y)=\getnz_1\cup\getnz_2$ where non-empty sets $\getnz_1$ and $\getnz_2$ are not connected in $\Graph$.
Construct $\yy$ in the following manner:
\begin{align*}
[\yy]_k=\left\{
\begin{array}{ll}
[\y]_k&,k\not\in\getnz_2\\
\zero&,k\in\getnz_2
\end{array}
\right..
\end{align*}
Then for each $j\in\getnz_1$, 
\begin{align*}
[\X\yy]_j=&\sum_{k\in\Vertex}[\X]_{jk}[\yy]_k
=[\X]_{jj}[\yy]_j+\sum_{k:k\sim j}[\X]_{jk}[\yy]_k\\
=&[\X]_{jj}[\y]_j+\sum_{k:k\sim j}[\X]_{jk}[\y]_k=[\X\y]_j=\zero.
\end{align*}
The third equality above is due to the fact that $j\in\getnz_1$ is not connected to any nodes in $\getnz_2$.
Therefore,
\begin{align*}
\yy^\Hn\X\yy=&\sum_{j\in\Vertex}[\yy]_j^\Hn[\X\yy]_j
=\sum_{j\in\getnz_1}[\yy]_j^\Hn[\X\yy]_j + \sum_{j\not\in\getnz_1}[\yy]_j^\Hn[\X\yy]_j\\
=&\sum_{j\in\getnz_1}[\yy]_j^\Hn\zero + \sum_{j\not\in\getnz_1}\zero^\Hn[\X\yy]_j=\zero.
\end{align*}
Since $\Graph$-invertibility implies $\X\geq0$, there must be $\X\yy=\zero$ as well.
As $|\getnz(\yy)|=|\getnz_1|<|\getnz(\y)|$ and $\yy$ is non-zero by construction, it contradicts the minimality of $|\getnz(\y)|$.
\end{proof}

\subsection{Proof of Theorem \ref{thm:sc2}}
We now prove that \eqref{eq:opf3} is exact under conditions A1, A2, A3 and A5.
According to Lemma \ref{thm:perturbation}, we only need to show \eqref{eq:opf_perturb} is exact for any $\eps_l$ in the sequence $\{\eps_l\}_{l=1}^{\infty}$ used in Theorem \ref{thm:Aproperty}.
If \eqref{eq:opf_perturb} is not exact, then there exists an optimal solution $\W^*$ such that $\rank(\W^*)\geq 2$.
\footnote{Note that $\rank(\W^*)$ cannot be 0 as the constraint $[\W^*]_{00}=\VV_{\rm ref}$ requires $\W^*$ to be a non-zero matrix.}
Note that in this subsection, $\W^*$ stands for the optimal solution to \eqref{eq:opf_perturb}.

Suppose the eigen-decomposition of $\W^*$ is 
\begin{align*}
\W^*=\sum_{l=1}^{mn}\eigv_l\eigvec_l\eigvec_l^\Hn
\end{align*}
where $\eigv_1\geq\eigv_2\geq\dots\eigv_{mn}\geq0$ are $\W^*$'s eigenvalues in  decreasing order and $\eigvec_l$ is the eigenvector associated with $\eigv_l$.
All the $\eigvec_l$ are non-zero and orthogonal.
As $\rank(\W^*)\geq 2$, we have $\eigv_2>0$.
Now let $2\leq L\leq mn$ be the largest number such that $\eigv_L>0$, then we have
\begin{align*}
\V_{\rm ref}\V_{\rm ref}^\Hn=[\W^*]_{00}=\sum_{l=1}^{L}\eigv_l[\eigvec_l]_0[\eigvec_l]_0^\Hn=:\U\U^\Hn,\\
\U:=\Big[\sqrt{\eigv_1}[\eigvec_1]_0,\sqrt{\eigv_2}[\eigvec_2]_0,\dots,\sqrt{\eigv_L}[\eigvec_L]_0\Big].
\end{align*}
If the rank of $\U$ is strictly greater than $1$, then we can find $\z\in\spanv(\U)$ such that $\z^\Hn\V_{\rm ref}=0$.
Then $\U^\Hn\z\neq 0$ implies
\begin{align*}
0=&\z^\Hn\V_{\rm ref}\V_{\rm ref}^\Hn\z=\z^\Hn\U\U^\Hn\z>0.
\end{align*}
The contradiction means $\rank(\U)\leq1$, and therefore $[\eigvec_1]_0$ and $[\eigvec_2]_0$ are linearly dependent.
If $[\eigvec_1]_0=r[\eigvec_2]_0$ for some $r\in\Complex$, then we construct $\keyvec=\eigvec_1-r\eigvec_2$.
Otherwise $[\eigvec_2]_0$ must be zero and we construct $\keyvec=\eigvec_2$.
Clearly we have
\begin{align}\label{eq:keyvec}
\keyvec\neq0, [\keyvec]_0=\zero.
\end{align}

On the other hand, KKT conditions give $\tr(\A^*\W^*)=0$.
As both $\A^*$ and $\W^*$ are positive semidefinite, we have
\begin{align*}
0=&\tr(\A^*\W^*)=\tr\Big(\A^*\sum_{l=1}^L \eigv_l\eigvec_l\eigvec_l^\Hn\Big)\\
=&\sum_{l=1}^L\tr\big(\eigv_l\A^*\eigvec_l\eigvec_l^\Hn\big)=\sum_{l=1}^L\tr\big(\eigv_l\eigvec_l^\Hn\A^*\eigvec_l\big)\geq0.
\end{align*}
The equality holds only when $\A^*\eigvec_l=\zero$ for all $l\leq L$. Hence
\begin{align}\label{eq:Akeyvec}
\A^*\keyvec=\zero.
\end{align}

As \eqref{eq:keyvec} has shown $1\leq|\getnz(\keyvec)|\leq n-1$, putting together Theorem \ref{thm:Aproperty}, Theorem \ref{thm:connectivity} and \eqref{eq:Akeyvec} implies that 
there exists $\minvec$ such that $\getnz(\minvec)$ is non-empty, connected in $\Graph$, $1\leq|\getnz(\minvec)|\leq n-1$, and $\A^*\minvec = \zero$.
Let $j$ be a node not in $\getnz(\minvec)$ but is connected to some node $k\in\getnz(\minvec)$. 
Since A2 requires $\Graph$ to be a tree and $\getnz(\minvec)$ is connected in $\Graph$,
$k$ must be the only node in $\getnz(\minvec)$ which is connected to $j\not\in\getnz(\minvec)$. Otherwise there is a cycle.
Then
\begin{align*}
[\A^*\minvec]_j=&\sum_{l\in\Vertex}[\A^*]_{jl}[\minvec]_l=[\A^*]_{jj}[\minvec]_j+\sum_{l:l\sim j}[\A^*]_{jl}[\minvec]_l\\
=&[\A^*]_{jj}\zero+[\A^*]_{jk}[\minvec]_k+\sum_{l:l\sim j,l\not\in\getnz(\minvec)}[\A^*]_{jl}[\minvec]_l.
\end{align*}
As $[\minvec]_l=\zero$ for $l\not\in\getnz(\minvec)$, we have $[\A^*\minvec]_j=[\A^*]_{jk}[\minvec]_k$.
Further, $(j,k)\in\Edge$ and the $\Graph$-invertibility of $\A^*$ implies $[\A^*]_{jk}$ is invertible.
Node $k$ is in $\getnz(\minvec)$ implies $[\minvec]_k\neq\zero$.
As a result, $[\A^*\minvec]_j=[\A^*]_{jk}[\minvec]_k$ must be non-zero, contracting $\A^*\minvec = \zero$.
This  implies that \eqref{eq:opf_perturb} is exact. Theorem \ref{thm:sc2} is proved.
\hfill\QED

\begin{table*}
\vspace{0.5em}
\caption{Illustrative example summary.}
\centering
\begin{tabular}{|c *{11}{V{3} c|c|c} |}
\hline
Buses & \multicolumn{3}{c V{3} }{650} & \multicolumn{3}{c V{3} }{632} & \multicolumn{3}{c V{3} }{633} & \multicolumn{3}{c V{3} }{634} & \multicolumn{3}{c V{3} }{645} & \multicolumn{3}{c V{3} }{646} & \multicolumn{3}{c V{3} }{671} & \multicolumn{3}{c V{3} }{684} & \multicolumn{3}{c V{3} }{611} & \multicolumn{3}{c V{3} }{652} & \multicolumn{3}{c|}{680}\\
\hline
Phases & \!\!a\!\! & \!\!b\!\! &\!\!c\!\! & \!\!a\!\! & \!\!b\!\! &\!\!c\!\! & \!\!a\!\! & \!\!b\!\! &\!\!c\!\! & \!\!a\!\! & \!\!b\!\! &\!\!c\!\! & \!\!a\!\! & \!\!b\!\! &\!\!c\!\! & \!\!a\!\! & \!\!b\!\! &\!\!c\!\! & \!\!a\!\! & \!\!b\!\! &\!\!c\!\! & \!\!a\!\! & \!\!b\!\! &\!\!c\!\! & \!\!a\!\! & \!\!b\!\! &\!\!c\!\! & \!\!a\!\! & \!\!b\!\! &\!\!c\!\! & \!\!a\!\! & \!\!b\!\! &\!\!c\!\! \\
\hline
Objective (real)       & \!\!+\!\! & \!\!+\!\! & \!\!+\!\! & & & & \!\!-\!\! & \!\!-\!\! & \!\!-\!\! & & & & \!\!+\!\! & \!\!+\!\! & \!\!+\!\! & & & & \!\!-\!\! & \!\!-\!\! & \!\!-\!\! & & & &  \!\!+\!\! & \!\!+\!\! & \!\!+\!\! & \!\!+\!\! & \!\!+\!\! & \!\!+\!\! & & & \\
\hline
Objective (reactive) & \!\!+\!\! & \!\!+\!\! & \!\!+\!\! & & & & \!\!-\!\! & \!\!-\!\! & \!\!-\!\! & & & & \!\!-\!\! & \!\!-\!\! & \!\!-\!\! & & & & \!\!+\!\! & \!\!+\!\! & \!\!+\!\! & & & & \!\!+\!\! & \!\!+\!\! & \!\!+\!\! & & & & & & \\
\hline
Constraints (real) & \up &\up & \up & & & & \lo & \lo & \lo & & & & \up & \up & \up & & & & \lo & \lo & \lo & & & &  \up & \up & \up & & & & & & \\
\hline
Constraints (reactive) & \cellcolor{red!25}\up &\cellcolor{red!25}\up & \cellcolor{red!25}\up & & & & \lo & \lo & \lo & & & & \lo & \lo & \lo & & & & \cellcolor{red!25}\up & \cellcolor{red!25}\up & \cellcolor{red!25}\up & & & &\cellcolor{red!25}\up &\cellcolor{red!25}\up &\cellcolor{red!25}\up & & & & & & \\
\hline
\end{tabular}
\label{tb:programming}
\begin{spacing}{0.0}
%\hrulefill
\end{spacing}
\end{table*}

\section{Discussion and Example}\label{sec:discuss}
\subsection{Discussion}
The main results in this paper are Theorem \ref{thm:sc1}, Theorem \ref{thm:sc2} and Corollary \ref{col:primalcon}.
They provide three sets of sufficient conditions under which the SDP relaxation for unbalanced multiphase network is exact.
These results have different interpretations and implications.

Sufficient conditions in Corollary \ref{col:primalcon} do not rely on the optimal solution of SDP relaxation, and can be checked {\it a priori}. 
Though these conditions are still restrictive in practice, we hope this result can stimulate more work on unbalanced multiphase networks.

Conditions in Theorems \ref{thm:sc1} and \ref{thm:sc2} rely on knowing the active constraints at the optimal point, which cannot be checked {a priori}.
Nevertheless, the actual value of the optimal point is not involved as long as one knows where the bottlenecks are.
% If the system operator is aware of the locations of potential bottlenecks in the network before solving OPF problems,
% it is possible to know the exactness before actually solving it.
These conditions also suggest that relaxation is more likely to be exact if critical buses turn out
to be spread over the network rather than concentrated in some neighborhood

So far we have assumed that \eqref{eq:opf3} has a unique optimal solution so that
inactive constraints at the optimal solution of \eqref{eq:opf3} remain inactive under a small perturbation.
If \eqref{eq:opf3} has multiple solutions, A4 and A5 in Theorems \ref{thm:sc1} and \ref{thm:sc2} and condition $\objset\cap\consetcol=\emptyset$ in Corollary \ref{col:primalcon} need to be replaced by the linear separability 
condition proposed in \cite{bose2015quadratically}.
The proof will be similar to that in this paper.

To generalize the result here to nonlinear cost functions, 
note that the proposed conditions involving the cost function only rely on the signs of $c_{j,{\rm re}}^\phi$ and $c_{j,{\rm im}}^\phi$.
The same argument in this paper can be extended to the nonlinear case when the cost function is convex, monotonic and additively separable in injections.

\subsection{Illustrative Example}
We use an 11 bus radial network shown in Fig. \ref{fig:network}, adapted from IEEE 13 node test feeder,  to illustrate our theoretical result.
\begin{figure}
\centering
\vspace{0.2em}
\includegraphics[width=0.76\columnwidth]{./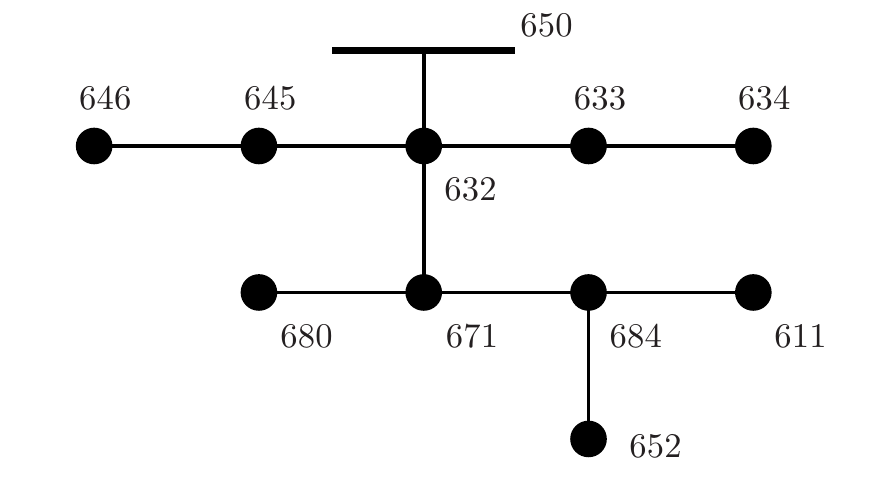}
\caption{An $11$ bus network revised from IEEE 13 node test feeder. The switch in the original system is assumed to be open so 2 buses are removed.}
\label{fig:network}
\end{figure}
The line configuration is reassigned and noise is added to the admittance matrix, so all the buses have three complete phases and each $y_{jk} $ is invertible.
For illustrative purpose, all the real/reactive injections are bounded from at most one direction.
Table \ref{tb:programming} summarizes our setup.
The `+' and `-' refer to the sign of $c_{j,{\rm re}}^\phi$ or $c_{j,{\rm im}}^\phi$ in the cost function.
For constraints, `u' (or `l') means the upper (or lower) bound for the corresponding injection is finite.
It is easy to check that no matter which constraints are active at the optimal point, conditions A1, A2, A3 and A5 must hold,
so Theorem \ref{thm:sc2} implies the optimal solution is of rank $1$. 

After solving the problem, there are actually nine active constraints, highlighted in light red in Table \ref{tb:programming}.
The largest two eigenvalues of the resulting optimal solution $\W^*$ are $36.90$ and $1.44\times 10^{-10}$, respectively.
It confirms that $\W^*$ is indeed rank 1 up to numerical precision.
%\slow{How about running a (large) set of cases where the conditions of the Theorems are violated,
%(or at least the condition of Corollary 1 are violated), to see if the solution is still rank-1?}

Finally, we refer to \cite{gan2014convex} for more simulation results, 
which show that semidefinite relaxation is also exact for IEEE 13, 37, 123-bus networks and a real-world 2065-bus network.
In the simulation of \cite{gan2014convex}, our sufficient conditions are actually violated since the cost function is set as
\begin{align*}
\sum\limits_{j\in\Vertex}\sum\limits_{\phi\in\M}\R(\s_j^\phi).
\end{align*}
It means even when all the buses are critical, the semidefinite relaxation can still be exact.

\section{Conclusion}
We have proposed % three sets of 
sufficient conditions for exact SDP relaxation in unbalanced multiphase radial networks.
% One of them can be checked {\it a priori}.
% while the other two are also useful when the bottleneck buses are known beforehand. 
These conditions suggest that having critical buses not adjacent to each other encourages exact relaxation.
% This conclusion can also be applied in the design of computationally beneficial power systems.

%%%%%%%%%%%%%%%%%%%%%%%%%%%%%%%%%%%%%%%%%%%%%%%%%%%%%%%%%%%%%%%%%%%%%%%%%%%%%%%%
%\section{ACKNOWLEDGMENTS}

%The authors gratefully acknowledge the contribution of National Research Organization and reviewers' comments.

%%%%%%%%%%%%%%%%%%%%%%%%%%%%%%%%%%%%%%%%%%%%%%%%%%%%%%%%%%%%%%%%%%%%%%%%%%%%%%%%

%References are important to the reader; therefore, each citation must be complete and correct. If at all possible, references should be commonly available publications.

\bibliographystyle{IEEEtran}
\bibliography{my-bibliography}

\end{document}